\newtheorem{thm}{Theorem}[section]
\newtheorem{cor}[thm]{Corollary}
\theoremstyle{definition}
\numberwithin{equation}{section}
\begin{document}


\baselineskip=17pt


\title{On the Axiomatic Systems of Steenrod Homology Theory of Compact Spaces}

\author{Anzor Beridze\\
Department of Mathematics\\
Batumi Shota Rustaveli State University\\
35, Ninoshvili St., Batumi 6010, Georgia\\ 
E-mail: a.beridze@bsu.edu.ge
\and 
Leonard Mdzinarishvili\\
Department of Mathematics\\
Georgian Technical University\\
77, Kostava St., Tbilisi 0171, Geogia\\
E-mail: l.mdzinarishvili@gtu.ge}

\date{}

\maketitle


\renewcommand{\thefootnote}{}

\footnote{2010 \emph{Mathematics Subject Classification}: 	55N07; 	55N40.}

\footnote{\emph{Key words and phrases}: Partial Continuity, Nontrivial Extension, the Uniqueness Theorem, the Universal Coefficient Formula, Exact Bifunctor, Injective Group.}

\renewcommand{\thefootnote}{\arabic{footnote}}
\setcounter{footnote}{0}


\begin{abstract}
On the category of compact metric spaces an exact homology theory was defined  and its relation to the Vietoris homology theory was studied by Steenrod \cite{10}. In particular, the homomorphism from the Steenrod homology groups to the Vietoris homology groups was defined and it was shown that the kernel of the given homomorphism are homological groups, which was called weak homology groups \cite{10}, \cite{11}. The Steenrod homology theory on the category of compact metric pairs was axiomatically described by J.Milnor. In  \cite{9} the uniqueness theorem is proved using the Eilenberg-Steenrod axioms and as well as relative homeomorphism and clusres axioms. J. Milnor constructed the homology theory on the category $Top^2_C$ of compact Hausdorff pairs and proved that on the given category it satisfies nine axioms  - the Eilenberg-Steenrod, relative homeomorphis and cluster axioms (see theorem 5 in \cite{9}). Besides, using the construction of weak homology theory, J.Milnor proved that constructed homology theory satisfies partial continuity property on the subcategory $Top^2_{CM}$ (see theorem 4 in \cite{9}) and the universal coefficient formula on the category $Top^2_C$ (see Lemma 5 in \cite{9}).   
On the category of compact Hausdorff pairs, different axiomatic systems were proposed by N. Berikashvili \cite{1}, \cite{2}, H.Inasaridze  and L. Mdzinarishvili \cite{6}, L. Mdzinarishvili \cite{8} and H.Inasaridze  \cite{5}, but there was not studied any connection between them. The paper studies this very problem. In particular, in the paper it is proved that any homology theory in Inasaridze sense is the homology theory in the Berikashvili sense, which itself is the homology theory in the Mdzinarishvili sense. On the other hand, it is shown that if a homology theory in the Mdzinarishvili sense is exact functor of the second argument, then it is the homology in the Inasaridze sense.
\end{abstract}

\section{Introduction}

Let $Top^2_C$ be the category of compact Hausdorff pairs and continuous maps and $ {\mathcal Ab}$ be the category of abelian groups.

A sequence $\bar{H}_*=\{ \bar{H}_n \}_{n \in Z}$ of covariant functors $ \bar{H}_n:Top^2_C \to Ab$ is called homological \cite{8}, \cite{3}, if:

$1_H$) for each object $(X,A) \in Top^2_C$ and $n \in Z$ there exists a $\partial$-homomorphism
\begin{equation}
\partial: \bar{H}_n(X,A) \to \bar{H}_{n-1} (A)
\end{equation}
($\bar{H}_{n} (A) \equiv \bar{H}_{n-1} (A, \emptyset)$, wehere $\emptyset$ is the empty set);

$2_H$) the diagram  
\begin{equation}
\begin{matrix}
    \bar{H}_n(X,A) ~~\to ~~\bar{H}_{n-1} (A;G)  \\
  ~~~~ \downarrow f_* ~~~~~~~~\downarrow (f_{|A})_*   \\
   \bar{H}_n(Y,B) ~~\to ~~\bar{H}_{n-1} (B;G)   \\
\end{matrix}.
\end{equation}
is commutative for each continuous mapping $f: (X,A) \to (Y,B)$ ($f_* : \bar{H}_n(X,A) \to \bar{H}_n(Y,B)$ and $(f_{|A})_*  : \bar{H}_n(A) \to \bar{H}_n(B)$ are  the homomorphisms induced by $f: (X,A) \to (Y,B)$ and $f_{|A}:A \to B$, correspondingly).

Let ${\mathcal Pol^2}$ be the full subcategory of the category $Top^2_C$, consisting of compact polyhedral pairs. 

A homological sequence $\bar{H}_*=\{ \bar{H}_n\}_{n \in Z}$ defined on the category  $Top^2_C$ is called homology theory in the Eilenberg-Steenrod sense if it satisfies homotopy, excision, exactness and dimension axioms \cite{3}. It is known that up to an isomorphism such a homology theory is unique on the subcategory ${\mathcal Pol^2}$ of compact polyhedral pairs  \cite{3} and it is denoted by ${H}_*=\{{H}_n\}_{n \in Z}$, but it is not unique on the category $Top^2_C$ of compact Hausdorff pairs.
   
The Steendor homology theory on the category of compact metric  pairs was first axiomatically described by J. Milnor \cite{9}. He proved the uniqueness theorem using the Eilenberg-Steenrod axioms and additionally two more - relative homeomorphism and clustes axioms:

RH (relative homeomorphism axiom): if $ f :(X,A) \to (Y,B)$ is a map in $Top_{CM}^2$ which carries $X-A$ homeomorphically onto $Y-B$, then

\begin{equation}
f_*:\bar{H}_n(X,A) \to \bar{H}_n(Y,B)
\end{equation} 
is an isomorphism.

CL(cluster axiom): if $X$ is the union of countable many compact subsets $X_1, X_2, \dots $which intersect pairwise at a single point $*$, and which  have diameters tending to zero, then $\bar{H}_n(X,*)$ is naturally isomorphic to the direct product of the groups $\bar{H}_n(X_i,*)$.

In \cite{9} the following is proved:

\begin{thm} (see theorem 3 in \cite{9}) Given two homology theories $\bar{H}_*^M$ and $\bar{H}_*$ on the category $Top_{CM}^2$, both satisfying the nine axioms (the Eilenberg-Steenrod, relative homeomorpism and cluster axioms), any coefficient isomorpism $\bar{H}_0^M(*) \approx \bar{H}_0(*) \approx G$ extends uniquely to an equivalence between the two homology theories.
\end{thm}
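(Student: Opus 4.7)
The plan is to reduce the statement to the Eilenberg--Steenrod uniqueness theorem on compact polyhedral pairs, and then to bootstrap from $\mathcal{Pol}^2$ to arbitrary compact metric pairs by an inverse-limit argument whose extra ingredients are precisely the relative homeomorphism and cluster axioms.

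First, since both $\bar H_*^M$ and $\bar H_*$ satisfy the four Eilenberg--Steenrod axioms and agree on coefficients via the prescribed isomorphism $\bar H_0^M(*)\cong\bar H_0(*)\cong G$, the classical uniqueness theorem recalled in the introduction produces a unique natural equivalence $\Phi_*\colon\bar H_*^M\to\bar H_*$ on the full subcategory $\mathcal{Pol}^2$, extending the given coefficient isomorphism. This step uses only the first four axioms and none of the compact-metric input.

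Next, I would represent an arbitrary pair $(X,A)\in Top_{CM}^2$ as an inverse limit $(X,A)=\varprojlim(P_k,Q_k)$ of compact polyhedral pairs with bonding maps $p_k\colon(P_{k+1},Q_{k+1})\to(P_k,Q_k)$; such a presentation exists because every compact metric space embeds in the Hilbert cube and may be written as a nested intersection of polyhedral neighborhoods. To transfer information from the finite stages to the limit, I would build the infinite mapping telescope $T$ of the tower $\cdots\to P_{k+1}\to P_k$, together with its subtelescope $S$ formed from the $Q_k$'s. The telescope admits a natural one-point compactification obtained by attaching successive mapping cylinders of diameters tending to zero and identifying all ``ends''; the cluster axiom then identifies $\bar H_n$ of this compactified telescope with the direct product $\prod_k\bar H_n(P_k,Q_k)$, and analogously for $\bar H_*^M$.

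The relative homeomorphism axiom enters in the third step: collapsing the ``shift'' subspace of the compactified telescope gives a map whose complement is homeomorphic to the complement of $(X,A)$ in an ambient compact space, so that RH identifies the resulting quotient homology with $\bar H_n(X,A)$. Comparing the long exact sequence of the shift map in both theories yields a natural Milnor-type short exact sequence
\begin{equation}
0\to{\varprojlim}^{1}\bar H_{n+1}(P_k,Q_k)\to\bar H_n(X,A)\to\varprojlim\bar H_n(P_k,Q_k)\to 0,
\end{equation}
and the same for $\bar H_*^M$. Since $\Phi_*$ is already defined on each polyhedral stage, naturality of this sequence together with the five lemma produces, uniquely, the extension of $\Phi_*$ to $(X,A)$; naturality in continuous maps $(X,A)\to(Y,B)$ follows by passing to suitable cofinal refinements of the defining inverse systems.

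The principal obstacle will be the geometric step in paragraph three: constructing the compactified telescope precisely enough that the cluster axiom genuinely computes its homology as an infinite product, and simultaneously that the shift-quotient becomes an RH-map onto $(X,A)$. Once this geometric identification is fixed, the derivation of the $\varprojlim^1$ exact sequence and the final five-lemma comparison are formal consequences of axioms $1_H$, $2_H$ and exactness.
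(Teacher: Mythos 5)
First, a point of reference: the paper does not prove this statement at all --- it is quoted verbatim as Theorem 3 of Milnor's notes \cite{9} as background for the introduction. So your sketch can only be measured against Milnor's own argument, whose skeleton you have in fact reproduced: Eilenberg--Steenrod uniqueness on $\mathcal{Pol}^2$, a compactified telescope over an inverse sequence of polyhedral pairs, the cluster axiom to compute the telescope's homology as a product $\prod_k \bar H_n(P_k,Q_k)$, and a shift/Mayer--Vietoris argument producing the $\varprojlim^1$ sequence. That much is the right route.

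The genuine gap is in your last step. Having the two short exact sequences
$0\to{\varprojlim}^{1}\bar H^M_{n+1}(P_k,Q_k)\to\bar H^M_n(X,A)\to\varprojlim \bar H^M_n(P_k,Q_k)\to 0$
and its analogue for $\bar H_*$, with $\Phi_*$ identifying the outer terms, does not by itself ``produce'' anything: a short exact sequence of abelian groups is not determined by its ends, and the five lemma can only be invoked \emph{after} a homomorphism $\bar H^M_n(X,A)\to\bar H_n(X,A)$ commuting with the outer isomorphisms has been exhibited. Constructing that middle map is the actual content of Milnor's proof: both sequences arise from the exact homology sequence of the pair (telescope, limit space) combined with the Mayer--Vietoris sequence of its even/odd decomposition, in which every group except $\bar H_n(X,A)$ is a product of polyhedral groups and every arrow is induced by a continuous map, so that the comparison on $\bar H_n(X,A)$ is forced by exactness and by naturality of the already-defined $\Phi_*$ on those products; uniqueness then also requires an argument (the surjection onto $\varprojlim$ is induced by the projections $X\to P_k$, which pins down the quotient part, and the sub part is pinned down by the connecting homomorphism of the telescope pair). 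A secondary inaccuracy: RH is not used to compare a ``shift quotient'' with a complement of $(X,A)$; it is used to replace the relative groups of (even part, $X$) by the groups of the quotient (even part)$/X$, which is an honest cluster of pointed spaces with diameters tending to zero, so that the cluster axiom becomes applicable. As written, the argument does not close at exactly the point where the nine axioms have to do their work.
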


In \cite{9} J. Milnor constructed the homology theory $\bar{H}_*^M$ on the category $Top_C^2$ of compact Hausdorff pairs and gave its several properties. In particular \cite{9} the following is proved:

\begin{thm} (see theorem 5 in \cite{9}) The homology theory $\bar{H}_*^M$, defined on the category $Top^2_C$ of compact Hausdorff pairs, satisfies the nine axioms (the Eilenberg-Steenrod axioms as well as relative homeomorphism and clusre axioms).

\end{thm}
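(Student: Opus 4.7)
The plan is to follow Milnor's construction of $\bar{H}_*^M$ via the canonical presentation of a compact Hausdorff pair $(X,A)$ as an inverse limit of compact polyhedral pairs. Concretely, one embeds $X$ in a Tychonoff cube and takes the cofinal directed system of compact polyhedral neighborhood pairs $(X_\alpha, A_\alpha)$, so that $(X,A) \cong \varprojlim_\alpha (X_\alpha, A_\alpha)$. One then defines $\bar{H}_n^M(X,A)$ to sit in the short exact sequence $0 \to \varprojlim{}^1 H_{n+1}(X_\alpha, A_\alpha) \to \bar{H}_n^M(X,A) \to \varprojlim_\alpha H_n(X_\alpha, A_\alpha) \to 0$, where the outer terms are the ordinary singular homology of compact polyhedra. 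Independence from the chosen cofinal system is checked in the standard way, and the $\partial$-homomorphisms are assembled from the connecting maps at each finite level.

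Given this construction, the homotopy, excision and dimension axioms reduce quickly to the corresponding statements for singular homology on the polyhedral approximations: a homotopy $(X \times I, A \times I) \to (Y,B)$ lifts compatibly through the inverse systems, and the resulting chain homotopies pass through $\varprojlim$ and $\varprojlim{}^1$; excision is obtained by choosing polyhedral neighborhoods that respect the excised subset; and $\bar{H}_n^M(\mathrm{pt})$ collapses to the coefficient group in degree zero. For the exactness axiom I would first write the long exact sequence of each polyhedral pair $(X_\alpha, A_\alpha)$, then splice the six-term Milnor sequences for $\bar{H}_n(A_\alpha)$, $\bar{H}_n(X_\alpha)$ and $\bar{H}_n(X_\alpha,A_\alpha)$ by a diagram chase, using naturality of the $\varprojlim$--$\varprojlim{}^1$ sequence together with the fact that $\varprojlim{}^i$ vanishes for $i \geq 2$ on countable systems of abelian groups.

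For the relative homeomorphism axiom the key observation is that if $f:(X,A) \to (Y,B)$ carries $X-A$ homeomorphically onto $Y-B$, then one can arrange compatible approximating polyhedral systems on both sides so that the induced map of inverse systems becomes, after passing to a cofinal subsystem of quotients $X_\alpha/A_\alpha \cong Y_\alpha/B_\alpha$, a level-wise isomorphism; taking $\varprojlim$ and $\varprojlim{}^1$ then yields the isomorphism on $\bar{H}_n^M$. For the cluster axiom, if $X = \bigcup X_i$ with the prescribed shrinking-wedge structure, one chooses the approximating system so that at each level one has a polyhedral wedge of approximations of the pointed spaces $(X_i,*)$; since both $\varprojlim$ and $\varprojlim{}^1$ commute with direct products, this gives the desired natural isomorphism $\bar{H}_n^M(X,*) \cong \prod_i \bar{H}_n^M(X_i,*)$.

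The main obstacles I anticipate are the exactness and relative homeomorphism axioms. Exactness requires a delicate diagram chase to glue the long exact sequences of the approximating pairs with the $\varprojlim$--$\varprojlim{}^1$ short exact sequence into a single long exact sequence for $\bar{H}_*^M$, and one must rule out spurious kernels or cokernels coming from the derived limit term. Relative homeomorphism is where one really needs that approximating systems can be chosen compatibly with a given map, so in practice any self-contained proof reduces to the careful verification performed by Milnor in Theorem 5 of \cite{9}.
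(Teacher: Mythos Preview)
The paper does not give its own proof of this statement at all: it is quoted verbatim as Theorem~5 of Milnor~\cite{9} and used purely as a cited input, so there is no in-paper argument to compare your sketch against.

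That said, your proposed route has two genuine gaps that would prevent it from working as written. First, you propose to \emph{define} $\bar{H}_n^M(X,A)$ as ``the group sitting in'' the short exact sequence
\[
0 \to {\varprojlim}^1 H_{n+1}(X_\alpha,A_\alpha) \to \bar{H}_n^M(X,A) \to \varprojlim H_n(X_\alpha,A_\alpha) \to 0,
\]
but an extension is not determined by its two ends, so this is not a definition. In Milnor's treatment this sequence is a \emph{consequence} of the construction (it is exactly Theorem~4 of~\cite{9}, restated here as Theorem~1.3), not the construction itself; one first builds $\bar{H}^M_*$ by an explicit chain-level model and only afterwards derives the $\varprojlim$--$\varprojlim^1$ sequence.

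Second, and more seriously for the present statement, your exactness argument invokes the vanishing of $\varprojlim^i$ for $i\ge 2$ ``on countable systems of abelian groups.'' The theorem, however, is asserted on $Top^2_C$, compact \emph{Hausdorff} pairs, where the directed system of compact polyhedral neighbourhoods in a Tychonoff cube is in general uncountable, and higher derived limits need not vanish. This is precisely why the paper records the $\varprojlim^1$ sequence (Theorem~1.3) only on the metric subcategory $Top^2_{CM}$. Your sketch therefore at best yields the nine axioms on $Top^2_{CM}$, not on $Top^2_C$; to get the full statement you really do need Milnor's original construction and verification, which is what both you and the paper ultimately defer to.
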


\begin{thm} (see theorem 4 in \cite{9}) Let $\bar{H}_*^M$ be a homology theory satisfying the nine axioms (the Eilenberg-Steenrod axioms as well as relative homeomorphism and cluster axioms), and let $ X_1 \leftarrow X_2 \leftarrow X_3 \leftarrow \dots $ be an inverse system of compact metric spaces with inverse limit $X$. Then there is an exact sequence
\begin{equation}
0 \to {\varprojlim}^1 \bar{H}_{n+1}^M(X_i) \to \bar{H}_n^M(X) \to \varprojlim \bar{H}_n^M(X_i) \to 0
\end{equation}
for each integer $n$. A corresponding assertion holds if each space is replaced by a pair. 
\end{thm}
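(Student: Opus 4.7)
The strategy is to package the entire inverse system into one compact Hausdorff space and read the claimed sequence out of the long exact sequence of a pair, using the cluster and relative homeomorphism axioms to compute the relative term as an infinite product.

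First I would build a compactified mapping telescope $T$ of the inverse system $f_i\colon X_{i+1}\to X_i$. Take $T$ to be the union inside $\prod_i X_i\times[0,1]$ of scaled mapping cylinders $M_i$ of $f_i$, sitting over the interval $[1/(i+1),1/i]$ and glued along their common ends $X_i\times\{1/i\}$, together with $X\times\{0\}$ at level zero. Then $T$ is compact metric, $X=\varprojlim X_i$ sits inside $T$ as the closed subspace at level $0$, and each $M_i$ deformation retracts onto $X_i$. Collapsing $X$ to a basepoint $*$, the quotient $T/X$ is a cluster of compact subsets $\bar M_i=M_i/(X_{i+1}\hookrightarrow X)$, wedged at $*$, whose diameters tend to zero because of the shrinking intervals. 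The cluster axiom therefore gives $\bar H_n^M(T/X,*)\cong\prod_i\bar H_n^M(\bar M_i,*)$, and each summand reduces to $\bar H_n^M(X_i)$ via the homotopy equivalence $M_i\simeq X_i$ together with the exact sequence of $(M_i,X_{i+1})$. The relative homeomorphism axiom identifies $\bar H_n^M(T,X)$ with $\bar H_n^M(T/X,*)$, and a parallel computation on $T$ itself yields $\bar H_n^M(T)\cong\prod_i\bar H_n^M(X_i)$.

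With these identifications the long exact sequence of the pair $(T,X)$ becomes
\[
\cdots\to\prod_i\bar H_{n+1}^M(X_i)\xrightarrow{d}\prod_i\bar H_{n+1}^M(X_i)\to\bar H_n^M(X)\to\prod_i\bar H_n^M(X_i)\xrightarrow{d}\prod_i\bar H_n^M(X_i)\to\cdots,
\]
and the decisive step is to identify $d$ with $1-\sigma$, where $\sigma=((f_i)_*)$ is the shift induced by the bonding maps. Granting this, the standard defining sequence $0\to\varprojlim\to\prod\xrightarrow{1-\sigma}\prod\to\varprojlim^1\to 0$ immediately splits the long exact sequence into the asserted four-term Milnor sequence.

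The principal obstacle will be the identification $d=1-\sigma$: one has to trace how the connecting homomorphism of $(T,X)$ interacts with the telescope structure, checking that a relative class supported on the $i$-th piece is sent to the difference between its $X_i$-component and the $(f_i)_*$-image of its $X_{i+1}$-component. A secondary technical point is to verify the diameter condition required by the cluster axiom for both $T$ and $T/X$, and to confirm that the quotient map $T\to T/X$ genuinely satisfies the hypothesis of the relative homeomorphism axiom (i.e.\ it restricts to a homeomorphism of complements). Finally, the extension to pairs $(X_i,A_i)$ follows by running the same telescope construction simultaneously on the total spaces and on the subspaces and applying the five lemma to the two resulting Milnor sequences fitted into the long exact sequence of the pair.
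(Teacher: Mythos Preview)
The paper does not prove this statement: Theorem~1.3 is quoted verbatim from Milnor~\cite{9} and carries no proof here, so there is nothing in the present paper to compare against beyond the attribution. Your telescope strategy is indeed the one Milnor uses, so in spirit you are reproducing the intended argument.

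That said, your write-up has a genuine gap at the step where you assert that $T/X$ is a cluster of the $\bar M_i$. In the compactified telescope the consecutive mapping cylinders meet along an entire copy of $X_{i+1}$ at level $1/(i+1)$, and collapsing $X$ (which lives only at level~$0$ and is disjoint from every $M_i$) does nothing to these overlaps. Hence $T/X$ is \emph{not} a wedge of the $M_i$'s and the cluster axiom does not apply as stated; the notation $M_i/(X_{i+1}\hookrightarrow X)$ does not correspond to anything that actually occurs in the quotient. For the same reason the ``parallel computation'' giving $\bar H_n^M(T)\cong\prod_i\bar H_n^M(X_i)$ is unjustified: $T$ is connected and is not itself a cluster, nor does it obviously retract onto anything with that homology.

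The standard repair (and what Milnor actually does) is to introduce the larger subspace $A=X\cup\bigcup_i X_i\times\{1/i\}$ consisting of \emph{all} the slice copies together with $X$. Then $T/A$ is a genuine cluster of the $M_i$ with both ends collapsed, and $A/X$ is a cluster of the $X_i^{+}$; two applications of the relative homeomorphism and cluster axioms compute $\bar H_*(T,A)$ and $\bar H_*(A,X)$ as products, and the exact sequence of the triple $(T,A,X)$ together with that of the pair $(T,X)$ then yields the map $1-\sigma$ and the Milnor sequence. Your identification of the boundary with $1-\sigma$ and the final extraction of $\varprojlim$ and $\varprojlim^1$ are fine once this is set up correctly.
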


\begin{thm} (see lemma 5 in \cite{9}) The homology theory $\bar{H}_*^M$ is related to the $\check{C}$ech cohomology theory by a split exact sequence

\begin{equation}
0 \to Ext( \check{H}^{n+1}(X,A);G ) \to \bar{H}_n^M(X,A;G) \to Hom(\check{H}^n(X,A);G) \to 0.
\end{equation}

\end{thm}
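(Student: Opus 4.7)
The plan is to reduce to the classical universal coefficient theorem on compact polyhedral pairs and then pass to the limit using partial continuity. First I would write $(X,A) = \varprojlim (X_i, A_i)$ as the inverse limit of its polyhedral nerves, which is possible because $(X,A)$ is a compact Hausdorff pair. Since $\bar{H}^M_*$ is a homology theory in the Eilenberg--Steenrod sense (by the theorem quoted just above), it agrees with ordinary simplicial homology on the subcategory $\mathcal{P}ol^2$, so the classical UCT supplies a naturally split short exact sequence
\begin{equation*}
0 \to \mathrm{Ext}(H^{n+1}(X_i,A_i); G) \to \bar{H}^M_n(X_i,A_i; G) \to \mathrm{Hom}(H^n(X_i,A_i); G) \to 0
\end{equation*}
for every $i$.

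Next I would combine two ``Milnor sequences''. The topological one, from partial continuity (the preceding theorem), is
\begin{equation*}
0 \to {\varprojlim}^{1} \bar{H}^M_{n+1}(X_i,A_i; G) \to \bar{H}^M_n(X,A; G) \to \varprojlim \bar{H}^M_n(X_i,A_i; G) \to 0,
\end{equation*}
while the algebraic one comes from the continuity of \v{C}ech cohomology, $\check{H}^k(X,A) = \varinjlim H^k(X_i,A_i)$. Applying $\mathrm{Hom}(-,G)$ gives $\mathrm{Hom}(\check{H}^n(X,A), G) = \varprojlim \mathrm{Hom}(H^n_i, G)$, while Jensen's derived-functor formula yields
\begin{equation*}
0 \to {\varprojlim}^{1}\mathrm{Hom}(H^{n+1}_i, G) \to \mathrm{Ext}(\check{H}^{n+1}(X,A); G) \to \varprojlim \mathrm{Ext}(H^{n+1}_i, G) \to 0.
\end{equation*}
The splitting of the polyhedral UCT decomposes the inverse system as $\bar{H}^M_n(X_i, A_i; G) \cong \mathrm{Hom}(H^n_i, G) \oplus \mathrm{Ext}(H^{n+1}_i, G)$, which lets me evaluate both outer terms of the topological sequence explicitly. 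The auxiliary algebraic fact needed here is that $\varprojlim^{1} \mathrm{Ext}(H^{n+2}_i, G) = 0$, which follows from Mittag--Leffler because each $H^{n+2}(X_i,A_i)$ is finitely generated and the indexing set is cofinally countable.

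Comparing the resulting sequences via a diagram-chase centred on the evaluation map $\bar{H}^M_n(X,A;G) \to \mathrm{Hom}(\check{H}^n(X,A), G)$ identifies the kernel of the evaluation with $\mathrm{Ext}(\check{H}^{n+1}(X,A); G)$, producing the required exact sequence. The main obstacle I expect is establishing \emph{splitness} of this limit sequence: the polyhedral UCT splits non-naturally, so its splittings do not automatically assemble across the inverse system. The cleanest route is to construct a section of the evaluation map directly at the chain level, exploiting the description of $\bar{H}^M_*$ as the homology of an inverse-limit chain complex dual to the \v{C}ech cochain complex, with careful bookkeeping to ensure that the section produced is compatible with the axiomatic $\bar{H}^M_*$ furnished by the preceding theorem rather than only with the chain-level model used to construct it.
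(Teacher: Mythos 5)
This theorem is quoted by the paper from Milnor \cite{9} (his Lemma 5) without proof, so there is no in-paper argument to compare against directly; the closest analogues are Theorems 2.1 and 2.2, which relate the universal coefficient sequence to limit descriptions of $\bar{H}_*$. Measured against the statement itself, your proposal has a genuine gap of scope. The theorem is asserted for all pairs in $Top^2_C$, but the partial-continuity result you invoke (Theorem 1.3, Milnor's Theorem 4) is available only for countable inverse systems of compact \emph{metric} pairs. A general compact Hausdorff pair is an inverse limit of polyhedral pairs over a directed set that need not be cofinally countable, so (i) the two-term $\varprojlim^1$ sequence you start from is not supplied by the quoted theorem, and (ii) your Mittag--Leffler argument for $\varprojlim^1 Ext(H^{n+2}_i;G)=0$, which explicitly appeals to cofinal countability, fails. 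At best your argument proves the lemma on $Top^2_{CM}$; the abstract of the paper itself stresses that Milnor proves partial continuity only on $Top^2_{CM}$ while the universal coefficient formula holds on all of $Top^2_C$, precisely because his proof of the latter works from the chain-level construction of $\bar{H}^M_*$ rather than from a limit sequence.

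Second, the step in which ``the splitting of the polyhedral UCT decomposes the inverse system as a direct sum'' is not legitimate: the splitting is not natural, so it need not commute with the bonding maps and does not decompose the inverse system, hence gives no decomposition of $\varprojlim$ or $\varprojlim^1$. You acknowledge the non-naturality, but the fix is not bookkeeping: the correct device (and the one the paper uses in Theorems 2.1 and 2.2) is to apply the derived-limit exact sequence to the short exact sequence of inverse systems $0 \to \{Ext(H^{n+1}_\lambda;G)\} \to \{H_n(X_\lambda;G)\} \to \{Hom(H^n_\lambda;G)\} \to 0$ and invoke $\varprojlim^r Ext(H^{n}_\lambda;G)=0$ for $r \ge 1$ (Huber--Meier, Corollary 1.5 in \cite{4}, valid for finitely generated $H^n_\lambda$ over arbitrary directed sets); this identifies the outer terms without using any splitting. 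Finally, you rightly flag splitness of the limit sequence as the main obstacle, but the proposed remedy --- constructing a section at the chain level --- abandons the axiomatic framework and is exactly the part of Milnor's original argument your outline was meant to replace; as written, the splitting is not established.
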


As we see the uniqueness theorem was proved only on the  category $Top_{CM}^2$ of compact metric pairs \cite{9} and therefore, the problem was open for the category $Top_C^2$. 

\ \

The axiomatic description of the Steenrod homology theory on the category $Top_C^2$ of compact Hausdorff pairs was given by Berikashvili \cite{1}, \cite{2}. In particular, in \cite{1} it is proved that if a homological sequence $\bar{H_*}=\{ \bar{H_n} \}_{n \in Z} $ defined on the category $Top_C^2$ of compact Hausdorff pairs satisfies the Eilenberg-Steenrod axioms and the following A, B and C axioms, then it is naturally isomorphic to the Chogoshvili homology theory:

\ \

A: The projection $(X,A) \to (X/A,*)$ induces an isomorphism $\bar{H}_n(X,A) \approx \bar{H}_*(X/A,*)$.

\ \
B: For the inverse spectrum of pairs $\{ (S_\alpha^n,*), \pi_{\alpha \beta} \}$, where  $S_\alpha^n$ is a finite cluster of $n$-dimension spheres and $\pi_{\alpha \beta}$ maps  each sphere of the cluster either to the fixed piont or homeomorphically to a cluster sphere, there holds the equality 
\begin{equation}
\bar{H}_*( \varprojlim \{ (S_\alpha^n,*), \pi_\beta^\alpha \}) \approx \varprojlim \{ \bar{H}_*(S_\alpha^n,*), (\pi_{\alpha\beta})_* \},   ~~~~~  n \in Z.
\end{equation}

\ \

C: The natural homomorphism 
\begin{equation}
\varinjlim \bar{H}_n(|\mathcal{N}(X)|_p) \to \bar{H}_n(X), ~~~~~ n \in Z,
\end{equation}
induced by the mapping $\omega:|\mathcal{N}(X)| \to X$, were  $|\mathcal{N}(X)|$ is the limit of the inverse spectrum of realizations of complexes of the spectrum $\mathcal{N}(X)=\{ \mathcal{N}_\lambda (X), \pi_{\lambda \mu} \}$ ($\mathcal{N}_\lambda (X)$ is the nerve of a finite closed covering obtained from a finite partitioning of $X$ \cite{1}) and $|\mathcal{N}_\lambda (X)|_p=\varprojlim \{ |\mathcal{N}_\lambda (X)|^p, \pi_{\lambda \mu} \}\subset |\mathcal{N}(X)| $ ($K^p$ denotes the $p$-skeleton of the complex $K$), is an isomorphisms.

\ \

In \cite{2} Berikashvili proposed new $C_1$ and $C_2$ axioms and  the universal coefficient formula as one more new axiom as well:

\ \

$C_1$: If a continuous map $f: X \to Y$ induces an isomorphism $f^* : \check{H}^n(Y;Z) \to \check{H}^n(X;Z)$ for $n < p$, then for $n < p-1$ a homomorphism $f_* : \bar{H}_n(X;Z) \to \bar{H}_n(Y;Z)$ is an isomorphism as well. 

 \ \

$C_2$: If a continuous map $f: X \to Y$ is surjective and  $\bar{H}_n(f^{-1}(y),*)=0$ for each $y \in Y$ and $n<p$, then for $n < p$ a homomorphism $f_* : \bar{H}_n(X;Z) \to \bar{H}_n(Y;Z)$ is an isomorphism.

\ \

$D$: For each pairs $(X,A)$ there exists  a functorial exact sequence

\begin{equation}
0 \to Ext(\check{H}^{n+1}(X,A);G) \to \bar{H}_n(X,A;G) \to Hom(\check{H}^n(X,A);G) \to 0,
\end{equation}
where $G=\bar{H}_0(*)$.

Consequently, in \cite{2} the following is proved:

\begin{thm} (see theorem 4.4 \cite{2}) The Steenrod-Sitnikov homology theory defined on the category $Top_{C}^2$ of compact Hausdorff pairs with coefficients any module $G$ uniquely is characterized by the Eilenberg-Steenrod axioms with one of the following 4 systems of axioms: 1) A, B, C axioms; 2)axiom D; 3) A, B, $C_1$ axioms; 4) A, B, $C_2$  axioms for the finite generated abelian group.
\end{thm}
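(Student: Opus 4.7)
The goal is to show that any two homology theories $\bar H_*$ and $\bar H'_*$ on $Top_C^2$ satisfying the Eilenberg-Steenrod axioms together with any one of the four additional axiom systems admit a natural equivalence extending a given coefficient isomorphism $\bar H_0(*) \cong \bar H'_0(*)$. The common first move is the classical Eilenberg-Steenrod uniqueness theorem \cite{3}, which produces, on the subcategory ${\mathcal Pol^2}$ of compact polyhedral pairs, a unique natural isomorphism between the restrictions of the two theories. The real content is extending this isomorphism from ${\mathcal Pol^2}$ to all of $Top_C^2$, and the four axiom systems provide four different mechanisms to do this.

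\emph{System 1 (axioms A, B, C).} Represent $X \in Top_C^2$ as the inverse limit of the nerve spectrum $\mathcal{N}(X) = \{\mathcal{N}_\lambda(X), \pi_{\lambda\mu}\}$ of finite closed partitions. Axiom C identifies $\bar H_n(X)$ with $\varinjlim_p \bar H_n(|\mathcal{N}(X)|_p)$, reducing the problem to computing $\bar H_n$ on inverse limits of $p$-skeleta. Collapsing the $(p{-}1)$-skeleton and applying axiom A reduces each $|\mathcal{N}(X)|^p / |\mathcal{N}(X)|^{p-1}$ to a cluster of $p$-spheres in the sense of axiom B, while axiom B then computes the homology of the inverse limit of such clusters. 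Since the groups in question are all determined on compact polyhedra by the ES axioms, assembling these steps functorially gives the natural equivalence on $Top_C^2$.

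\emph{System 2 (axiom D).} The split short exact sequence of axiom D expresses $\bar H_n(X,A;G)$ functorially in terms of the \v Cech cohomology $\check H^*(X,A)$, which is uniquely determined on $Top_C^2$. Given two such theories, the naturality of $D$ together with the ES isomorphism on polyhedra produces a morphism of short exact sequences whose outer terms are identities; the Five Lemma yields the desired natural isomorphism. The systems 3 and 4 refine system 1: one shows that axiom $C_1$ (respectively $C_2$ for finitely generated coefficients) suffices to derive the skeletal direct-limit isomorphism of axiom C. For $C_1$ one uses that the inclusion of skeleta is a \v Cech cohomology isomorphism in the relevant range; for $C_2$ one applies the fiber-vanishing hypothesis to the canonical surjection $|\mathcal{N}(X)| \to X$. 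Once axiom C is recovered, the argument of system 1 concludes.

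\emph{Main obstacle.} The delicate point throughout is the inverse-limit passage from polyhedra to compact Hausdorff spaces. In system 1 this means verifying that axiom B is strong enough to handle the spectra of cluster-of-spheres that arise from successive skeletal collapses, together with the $\varprojlim^1$ contribution implicit in them. In systems 3 and 4 the obstacle shifts: one must show that the purely cohomological continuity conditions $C_1$ and $C_2$ actually force the homological direct-limit statement of axiom C, a deduction that rests on the behavior of $Ext$ and $Hom$ applied to the comparison maps at the cohomology level and on the restriction to finitely generated coefficients in the $C_2$ case. For system 2 the main care is ensuring naturality of the splitting in the Five Lemma argument, which is supplied by the functorial statement of axiom D itself.
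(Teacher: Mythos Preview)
The paper does not contain a proof of this theorem. The statement appears in the introduction, attributed to Berikashvili \cite{2} (Theorem 4.4 there), purely as historical background; no argument for it is given anywhere in the present paper. There is therefore nothing here to compare your proposal against.

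On your sketch itself: the outline for system 2 has a genuine gap. To invoke the Five Lemma you must first produce, for an arbitrary compact Hausdorff pair, a natural map $\bar H_n(X,A;G) \to \bar H'_n(X,A;G)$ fitting into a morphism between the two short exact sequences supplied by axiom D. The Eilenberg--Steenrod uniqueness theorem gives you this map only on polyhedral pairs, and axiom D alone provides no mechanism for extending it to all of $Top_C^2$. You also call the sequence in axiom D ``split'', but as formulated in this paper axiom D asserts only a functorial exact sequence, not a natural splitting; and even a non-natural splitting would not manufacture the missing comparison map. Your treatment of systems 1, 3, and 4 is a more plausible outline, though the reduction of $C_1$ or $C_2$ to axiom C is where most of the real work lies and your sketch only gestures at it.
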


In \cite{6} H. Inasaridze and L. Mdzinarishvili gave one more different axiomatic system using the modified form of the continuity axiom, as it is called, partial continuity axiom:

PC (partial continuity axiom):  Let $(X,A)$ be the inverse limit of inverse system $\{ (X_\lambda ,A_\lambda ), p_{\lambda,\lambda'}, \Lambda \}$ of compact polyhedra, then for each integer $n$ there is a functorial exact sequence
\begin{equation}
0 \to {\varprojlim}^1 H_{n+1}(X_\lambda ,A_\lambda) \to \bar{H}_n(X,A) \to {\varprojlim} H_{n}(X_\lambda ,A_\lambda) \to 0.
\end{equation}

Using the partial continuity axiom in the paper  \cite{8} L. Mdzinarishvili defined a nontrivial external extension  $\bar{H}_*$ of the homology theory  $H_*$ defined on the category ${\mathcal Pol^2}$ of compact polyhedra pairs to the category $Top_C^2$ of compact Hausdorff pairs \cite{8}. In particular, homological sequence  $\bar{H}_*$ defined on the category $Top_C^2$ is called extension of homology theory  $H_*$ (which is unique up to an isomorphism) defined on the category ${\mathcal Pol^2}$, if  on the subcategory ${\mathcal Pol^2}$ it is equivalent to $H_*$ \cite{8}. The homological sequence  $\bar{H}_*= \{ \bar{H}_n \}_{n \in Z} $ defined on the category $Top_C^2$ is called a nontrivial external extension of the homology theory ${H}_*= \{{H}_n \}_{n \in Z}$ defined on the category  ${\mathcal Pol^2}$, if the following conditions are fulfilled:

$1_{NT}$)  $\bar{H}_*$ is an extension of the homology theory  $H_*$;

$2_{NT}$) the exact sequence 
\begin{equation}
0 \to {\varprojlim}^1 H_{n+1}(X_\lambda ,A_\lambda) \to \bar{H}_n(X,A) \to {\varprojlim} H_{n}(X_\lambda ,A_\lambda) \to 0
\end{equation}
holds for any object $(X,A) \in Top^2_C$, any inverse system $\{ (X_\lambda ,A_\lambda ), p_{\lambda,\lambda'}, \Lambda \}$ of compact polyhedra such that $(X,A)=\varprojlim \{ (X_\lambda ,A_\lambda ), p_{\lambda,\lambda'}, \Lambda \}$ and $n \in Z$;

$3_{NT}$) The commutative diagram  
\begin{equation}
\begin{matrix}
    {\varprojlim}^1 \bar{H}_{n+1}(X_\lambda ,A_\lambda) ~~\to ~~\bar{H}_n (X,A)  \\
  \downarrow {\varprojlim}^1 \tilde{f}_* ~~~~~~~~~~~~~~~~~~~~\downarrow f_* ~~~~~~  \\
    {\varprojlim}^1 \bar{H}_{n+1}(Y_\gamma ,B_\gamma) ~~\to ~~\bar{H}_n (Y,B)   \\
\end{matrix},
\end{equation}
holds for any continuous mapping $f:(X,A) \to (Y,B)$ from $Top^2_C$, where $\tilde{f}_* : \{ H_{n}(X_\lambda ,A_\lambda),(p_{\lambda,\lambda'})_*, \Lambda \} \to \{ H_{n}(Y_\gamma ,B_\gamma), (q_{\gamma,\gamma'})_*, \Gamma \}$ is mapping of the inverse systems;

$4_{NT}$) $\bar{H}_*$ satisfies the exactness axiom.

In \cite{8} a homological sequence $\bar{H}_*$ defined on the category $Top_C^2$ of compact Hausdorff pairs is called:

1) a homology theory in the Eilenberg-Steenrod sense if it satisfies the axioms of homotopy, excision, exactness and dimension;

2) a homology theory in the Milnor sense if it satisfies the axioms of homotopy, excision, exactness, dimension, relative homeomorphism and cluster axioms;

3) a homology theory in the Berikashvili sense if it satisfies the axioms of homotopy, excision, exactness, dimension and A, B and C axioms.

In \cite{8} it is shown that any nontrivial external extension is homology theory in the Eilenberg-Steenrod, in the Milnor as well as in the Berikashvili sense:

\begin{thm} (see theorem 1.2 in \cite{8}) if $\bar{H}_*$ is a nontrivial external extension of the homology theory  $H_*$  to the category $Top_{C}^2$, then it is a theory in the Eilenberg-Steenrod sense.
\end{thm}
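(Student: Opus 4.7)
The plan is to verify the four Eilenberg--Steenrod axioms (homotopy, excision, exactness, dimension) for $\bar{H}_*$ by exploiting the conditions $1_{NT}$--$4_{NT}$ in combination with the fact that the polyhedral theory $H_*$ already satisfies all four axioms on ${\mathcal Pol^2}$. Two of the four reduce at once: the dimension axiom follows from $1_{NT}$ because a one-point space is a polyhedron, so $\bar{H}_n(*) = H_n(*) = 0$ for $n \neq 0$; and the exactness axiom is stipulated outright by $4_{NT}$. The substance of the proof therefore lies in the homotopy and excision axioms.

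For the homotopy axiom, I would fix a homotopy $f \simeq g : (X,A) \to (Y,B)$ in $Top^2_C$ and expand both pairs as inverse limits of compact polyhedral pairs $\{(X_\lambda, A_\lambda)\}$ and $\{(Y_\gamma, B_\gamma)\}$. After passing to cofinal subsystems, the continuous maps $f$ and $g$ admit level representations as pro-morphisms $\tilde f, \tilde g$, and the given homotopy lifts levelwise (after further refinement) to polyhedral homotopies $\tilde f_{\gamma,\lambda} \simeq \tilde g_{\gamma,\lambda}$. Since $H_*$ satisfies homotopy on ${\mathcal Pol^2}$, this forces $\tilde f_* = \tilde g_*$ as morphisms of inverse systems of abelian groups, whence $\varprojlim \tilde f_* = \varprojlim \tilde g_*$ and $\varprojlim^1 \tilde f_* = \varprojlim^1 \tilde g_*$. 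Feeding these equalities into the naturality square $3_{NT}$ together with the short exact sequence $2_{NT}$, and using the companion naturality square for the surjection $\bar{H}_n(X,A) \twoheadrightarrow \varprojlim H_n(X_\lambda, A_\lambda)$, the short five lemma forces $f_* = g_*$ on $\bar{H}_n$.

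For the excision axiom, let $U \subset X$ be open with $\overline{U} \subset \operatorname{int}(A)$. The strategy is to produce an inverse system of polyhedral pairs $\{(X_\lambda, A_\lambda)\}$ expanding $(X,A)$ together with a compatible system of subpolyhedra $U_\lambda$ satisfying $\overline{U_\lambda} \subset \operatorname{int}(A_\lambda)$, whose complements form an inverse system $\{(X_\lambda \setminus U_\lambda, A_\lambda \setminus U_\lambda)\}$ expanding $(X \setminus U, A \setminus U)$. Such compatible expansions exist because $X$ is normal: $\overline{U}$ and $X \setminus \operatorname{int}(A)$ can be separated by disjoint open sets, and this separation can be threaded through any cofinal refinement of a chosen polyhedral expansion of $(X,A)$. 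The polyhedral excision axiom then yields isomorphisms $H_n(X_\lambda \setminus U_\lambda, A_\lambda \setminus U_\lambda) \to H_n(X_\lambda, A_\lambda)$ at every level, commuting with the bonding maps; passing to $\varprojlim$ and $\varprojlim^1$ preserves the isomorphism property, and the five lemma applied to the two partial continuity sequences (linked via $3_{NT}$) yields the excision isomorphism for $\bar{H}_*$.

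I expect the excision step to be the main obstacle, specifically the construction of the compatible cofinal polyhedral expansions for $(X,A)$ and $(X \setminus U, A \setminus U)$ in which polyhedral excision applies level by level. A secondary subtlety lives in the homotopy step: one must verify that a continuous homotopy between maps of compact Hausdorff pairs can be realized, after cofinal refinement, as a genuine levelwise polyhedral homotopy, so that the equality $\tilde f_* = \tilde g_*$ as morphisms of inverse systems of groups is legitimate before the five-lemma argument is deployed.
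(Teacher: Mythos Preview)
The paper does not prove this theorem at all: it is quoted verbatim from \cite{8} (Mdzinarishvili, 1986) with the parenthetical ``see theorem 1.2 in \cite{8}'' and no proof is supplied in the present paper. Consequently there is no in-paper argument against which your proposal can be compared.

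Assessed on its own terms, your outline is a reasonable reconstruction of how such a proof goes. The reductions for dimension (via $1_{NT}$) and exactness (via $4_{NT}$) are immediate and correct. Your strategy for homotopy and excision---approximate by polyhedral inverse systems, use that $H_*$ satisfies the axiom levelwise, then pass to $\varprojlim$ and $\varprojlim^1$ and apply the five lemma to the sequence $2_{NT}$ with the naturality from $3_{NT}$---is exactly the mechanism these axioms $2_{NT}$ and $3_{NT}$ are designed to support, and is the approach taken in \cite{8}. You have also correctly located the two genuine technical obligations: (i) that a homotopy $f\simeq g$ of maps between compact Hausdorff pairs induces, after cofinal refinement, equal morphisms of the associated pro-homology systems; and (ii) that for an excisive triple one can produce a single polyhedral inverse system for $(X,A)$ carrying compatible open subsystems $U_\lambda$ with $\overline{U_\lambda}\subset\operatorname{int}(A_\lambda)$ so that polyhedral excision applies termwise and the complements still form an expansion of $(X\setminus U, A\setminus U)$. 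Both facts are standard in the shape-theoretic treatment of compact Hausdorff spaces (cf.\ \cite{7}), but they are where the real work lies, and your proposal rightly flags them rather than proving them.
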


\begin{thm} (see theorem 1.3 in \cite{8}) if $\bar{H}_*$ is a nontrivial external extension of the homology theory  $H_*$, defined on the category $Top_{C}^2$, then it is a homology theory in the Milnor sense.
\end{thm}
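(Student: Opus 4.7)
The plan is to verify the two axioms beyond Eilenberg--Steenrod that define the Milnor sense, namely the relative homeomorphism axiom (RH) and the cluster axiom (CL); the homotopy, excision, exactness and dimension axioms are already given by Theorem 1.6. In both verifications the essential tool is the partial continuity short exact sequence $(2_{NT})$ from the definition of nontrivial external extension, combined with its functoriality $(3_{NT})$ and the classical computations of $H_*$ on compact polyhedral pairs.

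For (RH) the strategy is to reduce to Berikashvili's axiom A, that is, to show that the quotient map $\pi:(X,A)\to(X/A,\ast)$ induces an isomorphism $\pi_*:\bar{H}_n(X,A)\to\bar{H}_n(X/A,\ast)$. Any relative homeomorphism $f:(X,A)\to(Y,B)$ descends to a continuous bijection $\bar{f}:X/A\to Y/B$ from a compact space to a Hausdorff space, hence a homeomorphism; so $f_*=\bar{f}_*\circ\pi_*$ is an isomorphism once axiom A is established. For axiom A, represent $(X,A)=\varprojlim_\lambda(X_\lambda,A_\lambda)$ as an inverse limit of compact polyhedral pairs; then $(X/A,\ast)=\varprojlim_\lambda(X_\lambda/A_\lambda,\ast)$, and for each $\lambda$ the polyhedral quotient induces an isomorphism $H_n(X_\lambda,A_\lambda)\to H_n(X_\lambda/A_\lambda,\ast)$. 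Hence the induced map of inverse systems produces isomorphisms on both $\varprojlim H_n$ and $\varprojlim^{1}H_{n+1}$. Writing the sequence $(2_{NT})$ for both pairs, the functoriality $(3_{NT})$, and the five lemma then give that $\pi_*$ is an isomorphism.

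For (CL) let $X=\bigcup_i X_i$ with $X_i\cap X_j=\{\ast\}$ for $i\neq j$ and $\mathrm{diam}(X_i)\to 0$. Using that diameters tend to zero one checks that $\bigcup_{i>n}X_i$ is closed in $X$, so $Y_n:=X/\bigcup_{i>n}X_i$ is compact Hausdorff, coincides as a space with the finite wedge $X_1\vee\cdots\vee X_n$, and $X=\varprojlim_n Y_n$ (a continuous bijection from a compact to a Hausdorff space). Choose polyhedral presentations $X_i=\varprojlim_m X_i^{(m)}$ and set $W_{n,m}:=X_1^{(m)}\vee\cdots\vee X_n^{(m)}$, a compact polyhedron, so that $X=\varprojlim_{(n,m)}W_{n,m}$. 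For the polyhedral wedges $H_k(W_{n,m},\ast)\cong\bigoplus_{i=1}^n H_k(X_i^{(m)},\ast)$, and since finite direct sums commute with $\varprojlim$, passing first over $m$ and then over $n$ (where the tower $\bigoplus_{i=1}^{n}A_i\to\bigoplus_{i=1}^{n-1}A_i$ is a projection-onto-initial-factors, hence surjective and so Mittag--Leffler, with $\varprojlim^{1}=0$) yields
\[
\varprojlim_{(n,m)}H_k(W_{n,m},\ast)\;\cong\;\prod_{i=1}^{\infty}\varprojlim_m H_k(X_i^{(m)},\ast),
\]
together with the analogous identification for $\varprojlim^{1}$. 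Feeding this into $(2_{NT})$ for $X$, and comparing with $(2_{NT})$ for each $X_i$ via the retractions $r_i:X\to X_i$ that collapse $X_j$ to $\ast$ for $j\neq i$ (continuous thanks to the diameter condition), one assembles the required natural isomorphism $\bar{H}_n(X,\ast)\cong\prod_i\bar{H}_n(X_i,\ast)$.

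The main obstacle is the bookkeeping in (CL): one must identify the $\varprojlim$ and $\varprojlim^{1}$ of the \emph{double} inverse system $\{W_{n,m}\}$ in such a way that the resulting short exact sequence matches, via the retractions $r_i$, the product of the analogous sequences for the individual $X_i$, rather than merely producing isomorphic source and target abstractly. The Mittag--Leffler vanishing for the outer tower and the commutation of finite direct sums with inverse limits are what make the two instances of $(2_{NT})$ align into the clean product formula; handling the inner $\varprojlim^{1}$ via the naturality square $(3_{NT})$ applied to each $r_i$ is the delicate step.
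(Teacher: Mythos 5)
The paper never proves this statement: Theorem 1.7 is quoted from Mdzinarishvili's paper \cite{8} (Theorem 1.3 there) without any argument, so there is no in‑text proof to compare yours against. Judged on its own, your outline is correct and is the route one would expect: Eilenberg--Steenrod comes from Theorem 1.6; RH reduces to the quotient‑map statement (axiom A) via the continuous‑bijection‑from‑compact‑to‑Hausdorff trick, with axiom A itself obtained by comparing the two sequences $(2_{NT})$ for $(X,A)=\varprojlim(X_\lambda,A_\lambda)$ and $X/A=\varprojlim(X_\lambda/A_\lambda)$ through the polyhedral isomorphisms $H_n(X_\lambda,A_\lambda)\cong H_n(X_\lambda/A_\lambda,\ast)$; and CL comes from resolving the cluster by finite polyhedral wedges and comparing with $\prod_i\bar{H}_n(X_i,\ast)$ via the retractions $r_i$. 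Two points should be made explicit to close the argument. First, both five‑lemma steps need commutativity of the square involving the epimorphism $\bar{H}_n(X,A)\to\varprojlim H_n(X_\lambda,A_\lambda)$, whereas $3_{NT}$ as stated only supplies the square on the $\varprojlim^1$ side; you must use that the surjection onto $\varprojlim$ is the one induced by the projections $p_\lambda$ (so that square commutes by functoriality of $\bar{H}_n$ and the extension property $1_{NT}$) --- this is implicit in the word ``functorial'' in the PC axiom but is exactly what the five lemma consumes. Second, the identification of $\varprojlim^1$ over the double index $(n,m)$ is not just the Mittag--Leffler vanishing of the outer tower: the clean way is to pass to the cofinal diagonal of $\mathbb{N}\times\mathbb{N}$ and observe that the resulting tower $\bigoplus_{i\le n}H_{k}(X_i^{(n)},\ast)$ is a product of towers, each cofinally equal to the tower of the $i$‑th summand, and that $\varprojlim$ and $\varprojlim^1$ commute with products of towers; that yields both the $\varprojlim$ and the $\varprojlim^1$ identifications simultaneously and naturally with respect to the collapse maps $W_{n,m}\to X_i^{(m)}$. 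With these two points spelled out, your proof is complete.
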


By theorem 1.2 and theorem 1.7 the following is true:

\begin{cor} On the category $Top^2_{CM}$ of compact metric pairs the $\bar{H}_*$ is a nontrivial external extension if and only if it is the homology theory in the Milnor sense.
\end{cor}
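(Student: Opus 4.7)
The forward direction is essentially free: if $\bar{H}_*$ is a nontrivial external extension, then Theorem 1.7 supplies that it is a homology theory in the Milnor sense on all of $Top^2_C$, and hence remains so after restriction to the full subcategory $Top^2_{CM}$. The substance of the corollary lies in the converse.

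For the converse, suppose $\bar{H}_*$ is a homology theory in the Milnor sense on $Top^2_{CM}$. The plan is to bootstrap off Milnor's uniqueness theorem. By Theorem 1.2 the constructed theory $\bar{H}_*^M$ is in the Milnor sense on $Top^2_C$, so in particular on $Top^2_{CM}$, and by Theorem 1.1 any coefficient isomorphism $\bar{H}_0(*) \approx \bar{H}_0^M(*)$ extends uniquely to a natural equivalence $\bar{H}_* \simeq \bar{H}_*^M|_{Top^2_{CM}}$. Since the four conditions $1_{NT}$--$4_{NT}$ defining a nontrivial external extension are invariant under natural equivalence of homology theories, it is enough to verify that $\bar{H}_*^M|_{Top^2_{CM}}$ is itself a nontrivial external extension of $H_*$.

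The verification proceeds in four steps. Properties $1_{NT}$ and $4_{NT}$ are immediate from Theorem 1.2, since $\bar{H}_*^M$ satisfies the Eilenberg--Steenrod axioms and therefore agrees with the canonical $H_*$ on $\mathcal{Pol}^2$ via the dimension axiom. Property $2_{NT}$ --- the Milnor short exact sequence for $\bar{H}_*^M$ applied to any polyhedral inverse system whose limit lies in $Top^2_{CM}$ --- is exactly the content of Theorem 1.3. For $3_{NT}$, any continuous map $f:(X,A) \to (Y,B)$ in $Top^2_{CM}$ induces, by standard polyhedral approximation, a morphism $\tilde f$ between the associated inverse systems of compact polyhedral pairs, and the naturality of the derived functor ${\varprojlim}^1$ then produces the required commutative square.

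The main obstacle is the compatibility issue in $3_{NT}$: one has to ensure that the induced morphism $\tilde f_*$ of inverse systems of polyhedral homologies is canonical, i.e.\ independent of the chosen polyhedral expansions of $(X,A)$ and $(Y,B)$, and that the horizontal arrows --- coming from Theorem 1.3 applied to each expansion --- are natural with respect to $\tilde f_*$. This is settled by the usual cofinality argument for polyhedral pro-objects in $Top^2_{CM}$, combined with the functoriality of ${\varprojlim}^1$. Once $3_{NT}$ is in hand, $\bar{H}_*^M|_{Top^2_{CM}}$ satisfies all of $1_{NT}$--$4_{NT}$, and transporting along the natural equivalence from Theorem 1.1 completes the proof of the converse.
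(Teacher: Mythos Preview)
Your argument is correct. The paper itself offers no detailed proof of this corollary: it simply records ``By theorem 1.2 and theorem 1.7 the following is true'' (the reference to 1.2 is almost certainly meant to be Theorem~1.3, Milnor's $\varprojlim{}^1$ sequence). So the intended argument is: the forward implication is Theorem~1.7, and for the converse one applies Theorem~1.3 \emph{directly} to the given theory $\bar{H}_*$---the paper's phrasing of Theorem~1.3, ``Let $\bar{H}_*^M$ be a homology theory satisfying the nine axioms\dots'', is a hypothesis on an arbitrary such theory, not a statement about Milnor's particular construction---thereby obtaining $2_{NT}$ immediately; $1_{NT}$ and $4_{NT}$ come from the Eilenberg--Steenrod axioms, and $3_{NT}$ from naturality.

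Your route for the converse is slightly more circuitous: you invoke Theorem~1.1 (Milnor uniqueness) to transport the problem to $\bar{H}_*^M$, and only then verify $1_{NT}$--$4_{NT}$ for $\bar{H}_*^M$ via Theorems~1.2 and~1.3. This is logically fine, and it has the virtue of working even if one reads Theorem~1.3 restrictively (as in Milnor's original paper, where it is proved for his specific construction). But it introduces an extra dependency on the uniqueness theorem that the paper's one-line derivation avoids. In short: both approaches are valid; yours is more cautious about the scope of Theorem~1.3, while the paper's is shorter because it reads Theorem~1.3 as a statement about arbitrary Milnor-sense theories.
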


\begin{thm} (see theorem 1.5 in \cite{8}) if $\bar{H}_*$ is a nontrivial external extension of the homology theory  $H_*$ defined on the category $Top_{C}^2$, then it is a homology theory in the Berikashvili sense.
\end{thm}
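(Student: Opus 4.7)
Since Theorem~1.7 guarantees that $\bar{H}_*$ already satisfies the Eilenberg--Steenrod axioms, the plan is to verify only the remaining axioms $A$, $B$, and $C$.

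Axiom $A$ is immediate from Theorem~1.8. The quotient $q\colon (X,A)\to (X/A,\ast)$ carries $X\setminus A$ homeomorphically onto $(X/A)\setminus\{\ast\}$, so the relative homeomorphism axiom, which $\bar{H}_*$ enjoys as a homology theory in the Milnor sense, at once gives $q_*\colon \bar{H}_n(X,A)\approx\bar{H}_n(X/A,\ast)$.

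For axiom $B$ I will apply clause $2_{NT}$ (the partial continuity sequence) to the presentation $(X,\ast)=\varprojlim\{(S^n_\alpha,\ast),\pi_{\alpha\beta}\}$ of the limit as an inverse limit of finite clusters of $n$-spheres, which are compact polyhedra. Since $\bar{H}_k(S^n_\alpha,\ast)=H_k(S^n_\alpha,\ast)$ is free abelian in degree $k=n$ and zero otherwise, the only degrees needing attention are $k=n$ and $k=n-1$. In degree $n$ the sequence collapses to the asserted isomorphism because $H_{n+1}$ vanishes on each cluster. In degree $n-1$ I must show that ${\varprojlim}^1 H_n(S^n_\alpha,\ast)=0$. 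The hypothesis on $\pi_{\alpha\beta}$ — each sphere is mapped either to the basepoint or homeomorphically onto a sphere — means that, in the natural bases, each structure map sends generators to generators or to zero. The images $\mathrm{im}(H_n(S^n_\alpha,\ast)\to H_n(S^n_\beta,\ast))$ therefore form a decreasing chain of subgroups of the finitely generated free abelian group $H_n(S^n_\beta,\ast)$; by Noetherianity these stabilize, the Mittag--Leffler condition holds, and ${\varprojlim}^1$ vanishes.

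The hard part will be axiom $C$. I will apply clause $2_{NT}$ to the compact Hausdorff space $|\mathcal{N}(X)|_p=\varprojlim_\lambda|\mathcal{N}_\lambda(X)|^p$ (an inverse limit of compact polyhedra) to obtain
\begin{equation*}
0\to {\varprojlim}^1_\lambda H_{n+1}(|\mathcal{N}_\lambda(X)|^p)\to \bar{H}_n(|\mathcal{N}(X)|_p)\to \varprojlim_\lambda H_n(|\mathcal{N}_\lambda(X)|^p)\to 0,
\end{equation*}
and then pass to $\varinjlim_p$. Because the inclusion $|\mathcal{N}_\lambda(X)|^p\hookrightarrow |\mathcal{N}_\lambda(X)|$ induces an isomorphism on $H_k$ as soon as $p\ge k+2$, the $p$-stabilization is uniform in $\lambda$, so $\varinjlim_p$ commutes with both $\varprojlim_\lambda$ and ${\varprojlim}^1_\lambda$ and yields
\begin{equation*}
0\to {\varprojlim}^1_\lambda H_{n+1}(|\mathcal{N}_\lambda(X)|)\to \varinjlim_p\bar{H}_n(|\mathcal{N}(X)|_p)\to \varprojlim_\lambda H_n(|\mathcal{N}_\lambda(X)|)\to 0.
\end{equation*}
Applying clause $2_{NT}$ once more, now to $X$ presented via $\omega$ as an inverse limit of the polyhedra $|\mathcal{N}_\lambda(X)|$, produces the analogous sequence with $\bar{H}_n(X)$ in the middle, and the naturality demanded by clause $3_{NT}$ identifies the outer terms under $\omega_*$. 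The Five Lemma then forces $\omega_*\colon\varinjlim_p\bar{H}_n(|\mathcal{N}(X)|_p)\to\bar{H}_n(X)$ to be an isomorphism. The two points I expect to dominate this step are the uniform-in-$\lambda$ stabilization required to commute $\varinjlim_p$ with ${\varprojlim}^1_\lambda$, and the verification that the cofinal system of partition nerves is indeed a legitimate presentation of $X$ to which the partial continuity axiom may be applied.
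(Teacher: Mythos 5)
The paper itself gives no proof of this statement---it is quoted from Mdzinarishvili's paper \cite{8}---so your proposal has to stand on its own. The reduction to axioms A, B, C is the right frame (the theorem you need both for the Eilenberg--Steenrod part and for the relative homeomorphism axiom is the one asserting that a nontrivial external extension is a homology theory in the Milnor sense, i.e.\ Theorem 1.7, not 1.8), and the treatment of axiom A is fine. For axiom B the strategy via $2_{NT}$ is also right, but your justification of ${\varprojlim}^1 H_n(S^n_\alpha,*)=0$ invokes the wrong principle: descending chains of subgroups of a finitely generated free abelian group need \emph{not} stabilize ($\mathbb{Z}\supset 2\mathbb{Z}\supset 4\mathbb{Z}\supset\cdots$); Noetherianity is the ascending chain condition. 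What actually saves the step is that the bonding maps send basis elements to basis elements or to zero, so each image is the span of a \emph{subset of the finite standard basis} of $H_n(S^n_\beta,*)\cong G^{k_\beta}$ (not free abelian for general coefficients $G$), and a decreasing directed family of subsets of a finite set stabilizes; this gives Mittag--Leffler and hence the vanishing over a countable cofinal subsystem.

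The genuine gap is axiom C. Your final step applies $2_{NT}$ ``to $X$ presented via $\omega$ as an inverse limit of the polyhedra $|\mathcal{N}_\lambda(X)|$,'' but $X$ is \emph{not} that inverse limit: $\varprojlim_\lambda|\mathcal{N}_\lambda(X)|=|\mathcal{N}(X)|$ is a different space, and $\omega\colon|\mathcal{N}(X)|\to X$ is merely a surjection whose fibers are in general nondegenerate simplices---already for $X=[0,1]$ with dyadic closed partitions the fiber over a point separating two partition elements is a whole $1$-simplex. Consequently the second short exact sequence in your five-lemma diagram, the one with $\bar{H}_n(X)$ in the middle and $\varprojlim_\lambda H_n(|\mathcal{N}_\lambda(X)|)$ on the right, is simply not supplied by $2_{NT}$, and the issue you flag at the end as a ``verification'' is not a technicality but precisely the content of axiom C. To close the gap one must compare the pro-group $\{H_n(|\mathcal{N}_\lambda(X)|)\}$ with the pro-group of a genuine polyhedral presentation of $X$ (for instance via the acyclicity of the fibers of $\omega$, or by routing everything through the universal coefficient sequence, in the spirit of the Huber--Meier manipulations the paper carries out in Section 2 for the equivalent axiom D). As written, the argument for axiom C does not go through.
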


Consequently, the following is obtained:

\begin{cor} (see corollary 1.4 in \cite{8}) Any nontrivial external extension of the homology theory $H_*$ defined on the category $Top_{C}^2$ is isomorphic to the Steenrod homology theory. 
\end{cor}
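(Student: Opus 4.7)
The statement follows directly by combining the immediately preceding Theorem~1.9 with Berikashvili's characterization Theorem~1.5 (i.e.\ Theorem~4.4 in \cite{2}). The plan is to argue in two clean steps: first verify that a nontrivial external extension meets the hypotheses of Berikashvili's uniqueness theorem, then invoke that uniqueness theorem to conclude isomorphism with Steenrod--Sitnikov homology.

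\emph{Step 1: verify the Berikashvili axioms.} Let $\bar{H}_*$ be any nontrivial external extension of $H_*$ to $Top^2_C$. By Theorem~1.6, $\bar{H}_*$ satisfies the four Eilenberg--Steenrod axioms (homotopy, excision, exactness, dimension) on $Top^2_C$. By Theorem~1.9, $\bar{H}_*$ additionally satisfies Berikashvili's axioms~A, B and~C. Hence $\bar{H}_*$ is a homology theory in the Berikashvili sense in the terminology introduced above.

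\emph{Step 2: apply Berikashvili's uniqueness theorem.} The coefficient group is pinned down: since $\bar{H}_*$ extends $H_*$, we have $\bar{H}_0(*)\cong H_0(*)\cong G$ for the prescribed coefficient group $G$. By Theorem~1.5, any homological sequence on $Top^2_C$ satisfying the Eilenberg--Steenrod axioms together with axioms A, B, C with coefficient module $G$ is naturally isomorphic to the Steenrod--Sitnikov homology theory with coefficients in $G$. Applying this to $\bar{H}_*$ yields the claimed natural isomorphism.

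\emph{Main obstacle.} Since both the Berikashvili-sense property (Theorem~1.9) and the uniqueness theorem for the Berikashvili axiom system (Theorem~1.5) are already available, there is no substantive obstacle in the present corollary; it is a formal consequence. The real work was done in establishing Theorem~1.9, i.e.\ checking that axioms A, B, C hold for an arbitrary nontrivial external extension, and that step relies on the partial continuity condition $2_{NT}$ together with functoriality $3_{NT}$ to control the inverse systems appearing in axioms A and B, and on the naturality of the $\varprojlim^1$ six-term sequence to handle axiom C. Once Theorem~1.9 is granted, the corollary is an immediate citation of Theorem~1.5.
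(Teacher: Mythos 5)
Your proposal is correct and matches the paper's intended argument: the paper derives this corollary (introduced by ``Consequently, the following is obtained'') precisely by combining Theorem~1.9 (a nontrivial external extension satisfies the Eilenberg--Steenrod axioms together with Berikashvili's axioms A, B, C) with the uniqueness statement of Theorem~1.5 (Theorem~4.4 of \cite{2}), using axiom system~1) of that theorem. Your additional remark that the coefficient group is fixed by the extension property $\bar{H}_0(*)\cong H_0(*)$ is a correct and harmless elaboration of the same argument.
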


Note that there are many different constructions of an exact homology theory, but all of them are functors of the second argument as well: For each short exact sequence
\begin{equation}
0 \to G \to G' \to G'' \to 0,
\end{equation} 
there is the functorial natural long exact sequence:
\begin{equation}
\dots \to \bar{H}_{n+1}(X;G'') \to \bar{H}_n(X;G) \to \bar{H}_n(X;G') \to \bar{H}_n(X;G') \to \dots.
\end{equation}
Therefore, for a homology theory it is natural to consider it as a bifunctor. In the paper \cite{5} H. Inasaridze described  exact bifunctor homology theory using the continuity property for the infinitely divisible abelian groups. In particular, \cite{5} the following is proved:

\begin{thm} (see theorem 1 in \cite{5}) There exists one and only one exact bifunctor homology theory on the category  $Top_C^2$ of compact Hausdorff pairs with coefficients in the category of abelian groups (up to natural equivalence) which satisfies the axioms of homotopy, excision, dimension, and continuity for every infinitely divisible group.
\end{thm}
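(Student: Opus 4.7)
The plan is to handle existence and uniqueness separately.

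For existence, I would exhibit the Steenrod--Sitnikov homology theory with arbitrary abelian coefficients (for instance the theory $\bar{H}_*^M$ of Theorem~1.2) and verify the required axioms. Homotopy, excision, dimension and pair exactness are immediate from Theorem~1.2. The bifunctor exactness in the coefficient argument follows from applying a short exact sequence of coefficient groups to the defining chain complex. For the continuity axiom, given an infinitely divisible (hence injective) coefficient group $D$ and an inverse system $\{(X_\lambda,A_\lambda)\}$ of compact polyhedral pairs with limit $(X,A)$, the Milnor-type ${\varprojlim}^1$ sequence of Theorem~1.3 reduces the problem to the vanishing of ${\varprojlim}^1 \bar{H}_{n+1}^M(X_\lambda,A_\lambda;D)$, which holds because the homology groups with injective coefficients form an inverse system of injective abelian groups on which ${\varprojlim}^1$ vanishes.

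For uniqueness, suppose $\bar{H}_*$ and $\bar{H}'_*$ are two such theories with an identification $G=\bar{H}_0(*;G)=\bar{H}'_0(*;G)$. I would construct a natural equivalence $\eta\colon\bar{H}_*\to\bar{H}'_*$ in three stages. First, on the subcategory ${\mathcal Pol^2}$ of compact polyhedral pairs, both theories satisfy all of the Eilenberg--Steenrod axioms, so the classical uniqueness theorem furnishes a natural equivalence $\eta|_{{\mathcal Pol^2}}$ for every coefficient group. Second, I would extend $\eta$ to all of $Top_C^2$ for infinitely divisible coefficient groups $D$: writing $(X,A)\in Top_C^2$ as the inverse limit of a system $\{(X_\lambda,A_\lambda)\}$ of compact polyhedral pairs and applying continuity on both sides, one obtains canonical isomorphisms
\[ \bar{H}_n(X,A;D)\cong\varprojlim \bar{H}_n(X_\lambda,A_\lambda;D)\cong\varprojlim \bar{H}'_n(X_\lambda,A_\lambda;D)\cong\bar{H}'_n(X,A;D), \]
which define $\eta_{(X,A)}(-;D)$ naturally in both variables. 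Third, I would pass to arbitrary coefficient groups $G$ via an injective resolution $0\to G\to D_0\to D_1\to 0$ with $D_0,D_1$ infinitely divisible (take $D_0$ an injective hull of $G$; the quotient $D_1=D_0/G$ is again divisible). The exact-bifunctor axiom yields natural long exact sequences in coefficients for both theories, and the five lemma combined with the equivalence already known on $D_0$ and $D_1$ produces $\eta_{(X,A);G}$. Independence of the chosen resolution, and naturality in $G$, follow by comparing any two such resolutions via a map covering the identity on $G$ and invoking the naturality of the coefficient long exact sequences.

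The main obstacle will be naturality throughout: in both variables and compatibly with the $\partial$-homomorphisms of both the pair exact sequence and the coefficient exact sequence. The most delicate point is the third stage, where one must check that the five-lemma isomorphism is canonical; this requires chasing a three-dimensional diagram involving the two theories, the injective resolution, and a morphism of pairs, and invoking naturality of the coefficient long exact sequences of both $\bar{H}_*$ and $\bar{H}'_*$.
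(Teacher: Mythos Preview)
The paper does not itself prove this theorem; it is quoted from Inasaridze \cite{5} as background. What the paper does contribute (Section~2) is an indirect route: Theorem~2.1 shows that any theory in the Inasaridze sense satisfies the universal coefficient formula (axiom~D), Theorem~2.2 then derives the $\varprojlim^1$ short exact sequence, and Corollary~1.10 gives uniqueness via the nontrivial-external-extension characterisation. Conversely, Theorem~2.3 shows that a nontrivial external extension that is exact in the coefficient variable is continuous for injective coefficients, which handles existence once one knows the Steenrod theory is such an extension.

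Your outline is reasonable in shape but has two genuine gaps. In the existence step, the claim that $H_n(X_\lambda;D)$ is injective whenever $D$ is injective is false: by the homological universal coefficient theorem this group contains summands $\operatorname{Tor}(\mathbb{Z}/m,D)\cong D[m]$, and the $m$-torsion of a divisible group need not be divisible (take $D=\mathbb{Q}/\mathbb{Z}$, $m=2$). The vanishing of $\varprojlim^1 H_{n+1}(X_\lambda;D)$ is nonetheless true; the argument the paper gives (proof of Theorem~2.3) identifies $H_{n+1}(X_\lambda;D)\cong\operatorname{Hom}(H^{n+1}(X_\lambda);D)$ and then uses the Huber--Meier exact sequence \cite{4} to embed $\varprojlim^1\operatorname{Hom}(H^{n+1}(X_\lambda);D)$ into $\operatorname{Ext}(\varinjlim H^{n+1}(X_\lambda);D)=0$.

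In the uniqueness step, the five lemma does not \emph{produce} a map; it only shows that a map already sitting in a commutative ladder is an isomorphism. From the two coefficient long exact sequences you obtain that $\bar H_n(X;G)$ and $\bar H'_n(X;G)$ are both extensions of the same kernel by the same cokernel, but an extension of abelian groups is not determined by its end terms, so no canonical comparison map falls out. You must first construct $\eta_{(X,A);G}$ by some other means; one clean way is to derive the universal coefficient formula for both theories (exactly the content of the paper's Theorem~2.1) and compare through it, which is essentially the route the paper's Section~2 takes.
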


Therefore, for the Steendor homology theory there are different axiomatic systems, but it is not known what the relation between them is and which one is the minimal one in the axiomatic sense. In the second part we will study this problem.

\section{Relations between Different Axiomatic Systems}

In this paper we will say that a homological sequence $\bar{H}_*$ defined on the category $Top_C^2$ of compact Hausdorff pairs is:

1) a homology theory in the Berikashvili sense if it satisfies the axioms of homotopy, excision, exactness, dimension and axiom D (The Universal Coefficient Formula);

2)  a homology theory in the Mdzinarishvili sense if it is a nontrivial external extension;

3)  a homology theory in the Inasaridze sense if it is the exact functor of the second argument  and satisfies the axioms of homotopy, excision, exactness, dimension and continuity for every infinitely divisible group.

Note that in the category $\mathcal{A}b$ of abelian groups G is infinitely divisible group if and only if it is injective. Therefore, in our paper instead of the term "infinitely divisible group" we will use "injective group".

\begin{thm} If $\bar{H}_*$ is a homology  theory in the Inasaridze sense, defined on the category $Top_C^2$ of compact Hausdorff pairs,  then it is the homology theory in the Berikashvili sense.
\end{thm}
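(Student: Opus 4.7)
The plan is to derive axiom D (the universal coefficient formula) from the exact-bifunctor structure and continuity for injective coefficient groups, by running the standard injective-resolution argument. Since a homology theory in the Inasaridze sense satisfies homotopy, excision, exactness and dimension by hypothesis, only axiom D requires verification.

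Fix $(X,A) \in Top_C^2$ and represent it as an inverse limit $(X,A) = \varprojlim_\lambda (X_\lambda,A_\lambda)$ of compact polyhedral pairs. Choose a two-term injective resolution $0 \to G \to Q^0 \to Q^1 \to 0$ in $\mathcal{A}b$, which exists because the category of abelian groups has injective dimension one. Exactness of $\bar{H}_*$ in the coefficient argument applied to this resolution produces a long exact sequence in $\bar{H}_*(X,A;-)$; because only three coefficient terms are nonzero, it collapses into a short exact sequence
\begin{multline*}
0 \to \operatorname{coker}\bigl(\bar{H}_{n+1}(X,A;Q^0) \to \bar{H}_{n+1}(X,A;Q^1)\bigr) \to \bar{H}_n(X,A;G) \\
\to \ker\bigl(\bar{H}_n(X,A;Q^0) \to \bar{H}_n(X,A;Q^1)\bigr) \to 0.
\end{multline*}

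The crucial step is to identify $\bar{H}_n(X,A;Q) \cong \operatorname{Hom}(\check{H}^n(X,A);Q)$ naturally in $(X,A)$ and in $Q$, for every injective $Q$. On each compact polyhedral pair $(X_\lambda,A_\lambda)$ the Eilenberg--Steenrod uniqueness theorem forces $\bar{H}_*$ to coincide with the classical theory $H_*$, and the classical UCF on a polyhedron with injective coefficients degenerates (its Ext term vanishes) to $H_n(X_\lambda,A_\lambda;Q) \cong \operatorname{Hom}(H^n(X_\lambda,A_\lambda);Q)$. Applying the continuity axiom to the inverse system $\{(X_\lambda,A_\lambda)\}$ and using that $\operatorname{Hom}(-,Q)$ converts direct limits to inverse limits yields
\begin{equation*}
\bar{H}_n(X,A;Q) \cong \varprojlim_\lambda \operatorname{Hom}\bigl(H^n(X_\lambda,A_\lambda);Q\bigr) \cong \operatorname{Hom}\bigl(\check{H}^n(X,A);Q\bigr).
\end{equation*}

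Finally, applying $\operatorname{Hom}(\check{H}^n(X,A),-)$ to the resolution $0 \to G \to Q^0 \to Q^1 \to 0$ produces a four-term exact sequence whose kernel is $\operatorname{Hom}(\check{H}^n(X,A);G)$ and whose cokernel is $\operatorname{Ext}(\check{H}^n(X,A);G)$. Substituting this and the analogous identification in dimension $n+1$ into the short exact sequence produced by the resolution recovers exactly
\begin{equation*}
0 \to \operatorname{Ext}\bigl(\check{H}^{n+1}(X,A);G\bigr) \to \bar{H}_n(X,A;G) \to \operatorname{Hom}\bigl(\check{H}^n(X,A);G\bigr) \to 0,
\end{equation*}
which is axiom D. The main obstacle is the third step: one must verify that the polyhedral identification $H_n(-;Q) \cong \operatorname{Hom}(H^n(-);Q)$ is natural in the bonding maps (so that the inverse-limit passage is legitimate), natural in $Q$ (so that applying it termwise to the resolution is compatible with the connecting homomorphisms of the long exact sequence), and natural in continuous maps of pairs (so that the resulting short exact sequence is functorial in $(X,A)$, as axiom D requires).
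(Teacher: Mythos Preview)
Your proof is correct and follows essentially the same route as the paper: identify $\bar{H}_n(-;Q)\cong\operatorname{Hom}(\check{H}^n(-);Q)$ for injective $Q$ via continuity plus the polyhedral UCF (with the $\operatorname{Ext}$ term vanishing), then feed a length-one injective resolution of $G$ through both the coefficient long exact sequence and $\operatorname{Hom}(\check{H}^n(X,A);-)$, and match the resulting kernel/cokernel descriptions. The only differences are cosmetic---you work with pairs throughout and introduce the resolution before the identification, whereas the paper does the identification first and writes the argument for absolute spaces---and you flag the naturality obligations explicitly, which the paper leaves implicit.
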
 
\begin{proof} For each compact Hausdorff space $X \in Top_C^2$ consider an inverse system $ {\bf {X}} = \{ X_{\lambda}, p_{\lambda, \lambda'}, \Lambda \}$ of compact polyhedra such that 
$X={\varprojlim}{\bf {X}}$. By the condition of the theorem, for each injective group $G$ we have an isomorphism:
\begin{equation}
 \bar{H}_n( X;G)=\bar{H}_n(\varprojlim X_\lambda;G) \approx\varprojlim H_n(X_\lambda;G).
\end{equation}
For each compact polyhedra $X_\lambda$ we have the exact sequence \cite{7}:
\begin{equation}
0 \to Ext(H^{n+1}(X_{\lambda});G) \to H_n (X_{\lambda};G) \to Hom(H^n(X_{\lambda});G) \to 0,
\end{equation}
which induces the long exact sequence:

 $0 \to \varprojlim Ext(H^{n+1}(X_{\lambda});G) \to \varprojlim H_n (X_{\lambda};G) \to \varprojlim Hom(H^n(X_{\lambda});G) \to$
 
 $ {\varprojlim}^1 Ext(H^{n+1}(X_{\lambda});G) \to {\varprojlim}^1 H_n (X_{\lambda};G) \to {\varprojlim}^1 Hom(H^n(X_{\lambda});G) \to$ 
 \begin{equation}
{\varprojlim}^2 Ext(H^{n+1}(X_{\lambda});G) \to {\varprojlim}^2 H_n (X_{\lambda};G) \to {\varprojlim}^2 Hom(H^n(X_{\lambda});G) \to  \dots.
 \end{equation}
  
Note that for each injective group $G$ the functor $Ext(-;G)$ is trivial and by (2.3) we obtain the isomorphism:
\begin{equation}
 ~~ {\varprojlim}H_n(X_\lambda;G) \approx {\varprojlim} Hom(H^n(X_\lambda);G).
\end{equation}
If we apply the isomorphism ${\varprojlim} Hom(H^n(X_\lambda);G) \approx Hom ({ \varinjlim} H^n(X_\lambda);G)$, then by (2.4) we obtain:
\begin{equation}
 ~~ {\varprojlim}H_n(X_\lambda;G) \approx Hom({\varinjlim} H^n(X_\lambda);G)=Hom( \check{H}^n(X);G).
\end{equation}
Therefore, by (2.1) if $G$ is an injective then
\begin{equation}
\bar{H}_n(X;G) \approx \varprojlim H_n(X_\lambda;G) \approx Hom(  \check{H}^n(X);G).
\end{equation}
Now consider any abelian group $G$ and corresponding injective resolution:
\begin{equation}
 ~~0 \to G \to G' \to G'' \to 0.
\end{equation}

Apply to the sequence (2.7) by the functor $Hom(\check{H}^n(X);-)$. The groups $G'$ and $G''$ are injective and so we have: 
$$0 \to Hom(\check{H}^n(X);G)  \to  Hom(\check{H}^n(X);G') \to$$
\begin{equation}
   Hom(\check{H}^n(X);G'') \to  Ext(\check{H}^n(X);G) \to 0.
\end{equation}
Therefore, for each integer $n \in N$ we have
\begin{equation}
 Ker(Hom(\check{H}^n(X):G') \to  Hom(\check{H}^n(X):G'')) \approx Hom(\check{H}^n(X);G),
\end{equation}
\begin{equation}
 Coker(Hom(\check{H}^n(X):G') \to  Hom(\check{H}^n(X):G'')) \approx Ext(\check{H}^n(X);G).
\end{equation}
Now apply sequence (2.7) by homological bifunctor $\bar{H}_*(X;-)$, which gives the following long exact sequence:
\begin{equation} \dots  \to \bar{H}_{n+1}(X;G') \to  \bar{H}_{n+1}(X;G'') \to  \bar{H}_{n}(X;G) \to \bar{H}_{n}(X;G') \to  \bar{H}_{n}(X;G'') \to \dots.
\end{equation}
Therefore, for each $n \in N$ we obtain the following short exact sequence:
$$0  \to Coker(\bar{H}_{n+1}(X;G') \to  \bar{H}_{n+1}(X;G'')) \to$$
\begin{equation}   \bar{H}_{n}(X;G) \to Ker(H_{n}(X;G') \to  \bar{H}_{n}(X;G'')) \to 0.
\end{equation}
By (2.6), (2.9), (2.10) and (2.12)  we finally obtain the following short exact sequence:
\begin{equation} 0  \to Ext( \check{H}^{n+1}(X;G)) \to  \bar{H}_{n}(X;G) \to Hom(\check{H}^{n}(X;G) \to  0.
\end{equation}
\end{proof}

\begin{thm} If $\bar{H}_*$ is a homology  theory in the Berikashvili sense, defined on the category $Top_C^2$ of compact Hausdorff pairs,  then it is a homology theory in the Mdzinarishvili sense.
\end{thm}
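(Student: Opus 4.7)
The plan is to verify each of the four defining conditions $1_{NT}$--$4_{NT}$ of a nontrivial external extension for a given Berikashvili-sense theory $\bar{H}_*$. Conditions $1_{NT}$ (extension of $H_*$) and $4_{NT}$ (exactness) come for free: on the subcategory $\mathcal{P}ol^2$ the Berikashvili axioms restrict to the Eilenberg--Steenrod axioms, so by the classical uniqueness theorem $\bar{H}_*|_{\mathcal{P}ol^2}\cong H_*$, while exactness is an explicit Berikashvili axiom. Condition $3_{NT}$, the naturality of the Milnor short exact sequence, will follow once $2_{NT}$ is established, because every ingredient (axiom D, Čech cohomology, and the polyhedral UCF) is functorial.

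The real content is $2_{NT}$: for $(X,A)=\varprojlim(X_\lambda,A_\lambda)$ with $(X_\lambda,A_\lambda)\in\mathcal{P}ol^2$, produce a natural short exact sequence
\begin{equation*}
0\to{\varprojlim}^1 H_{n+1}(X_\lambda,A_\lambda)\to\bar{H}_n(X,A)\to\varprojlim H_n(X_\lambda,A_\lambda)\to0.
\end{equation*}
First I would invoke axiom D for $(X,A)$ to get
\begin{equation*}
0\to Ext(\check{H}^{n+1}(X,A);G)\to\bar{H}_n(X,A;G)\to Hom(\check{H}^n(X,A);G)\to0.
\end{equation*}
On each polyhedron $(X_\lambda,A_\lambda)$ the classical split UCF gives a short exact sequence of inverse systems in $\lambda$; the derived functor long exact sequence then yields a six-term exact sequence linking $\varprojlim$ and $\varprojlim^1$ of $Ext$, $H_n$ and $Hom$ terms.

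To bridge the two I would use three standard identities: $\check{H}^n(X,A)=\varinjlim H^n(X_\lambda,A_\lambda)$; the adjunction $Hom(\varinjlim A_\lambda;G)\cong\varprojlim Hom(A_\lambda;G)$; and the Roos/Jensen-type exact sequence
\begin{equation*}
0\to{\varprojlim}^1 Hom(A_\lambda;G)\to Ext(\varinjlim A_\lambda;G)\to\varprojlim Ext(A_\lambda;G)\to0,
\end{equation*}
which is applicable because compact polyhedra have finitely generated integral cohomology. Substituting these three into the axiom D sequence and comparing with the six-term sequence from the polyhedral UCF via a commutative ladder, a Five-Lemma / diagram chase isolates the desired Milnor short exact sequence and identifies the right-hand arrow with the canonical map $\bar{H}_n(X,A)\to\varprojlim\bar{H}_n(X_\lambda,A_\lambda)$ induced by the projections.

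The main obstacle will be the bookkeeping of this ladder: one must check that the composite $\bar{H}_n(X,A)\to Hom(\check{H}^n(X,A);G)\to\varprojlim Hom(H^n(X_\lambda,A_\lambda);G)$ really factors as the canonical projection followed by the polyhedral UCF map, and that the induced kernel map realizes ${\varprojlim}^1 H_{n+1}$ on the nose (not merely up to some non-canonical extension). A more indirect route, should the diagram chase prove delicate, is to appeal to the Berikashvili uniqueness theorem (Theorem~1.5 of the excerpt): Mdzinarishvili's nontrivial external extension is itself a Berikashvili theory by Theorem~1.10, so any Berikashvili theory is naturally equivalent to it and therefore inherits all of $1_{NT}$--$4_{NT}$ by transport of structure.
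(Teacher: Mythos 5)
Your proposal follows essentially the same route as the paper: the heart of both arguments is to compare the axiom D sequence for $X$ with the inverse limit of the polyhedral universal coefficient sequences via a commutative ladder, using $\check{H}^n(X)=\varinjlim H^n(X_\lambda)$, the adjunction $Hom(\varinjlim A_\lambda;G)\cong\varprojlim Hom(A_\lambda;G)$, the Huber--Meier sequence relating ${\varprojlim}^1 Hom$, $Ext(\varinjlim A_\lambda;G)$ and $\varprojlim Ext(A_\lambda;G)$, and finite generation of $H^*(X_\lambda)$ to kill the higher derived limits and to identify ${\varprojlim}^1 Hom(H^{n+1}(X_\lambda);G)$ with ${\varprojlim}^1 H_{n+1}(X_\lambda;G)$. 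The paper records only this derivation of the Milnor sequence (condition $2_{NT}$), so your extra remarks on $1_{NT}$, $3_{NT}$, $4_{NT}$ and the transport-of-structure shortcut go beyond, but are consistent with, its proof.
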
 

\begin{proof} For each compact Hsusdorff space $X \in Top_C^2$ consider an inverse system $ {\bf {X}} = \{ X_{\lambda}, p_{\lambda, \lambda'}, \Lambda \}$ of compact polyhedra such that 
$X={\varprojlim}{\bf {X}}$. For each $\lambda \in \Lambda$ and any abalian group $G$ consider the following exact sequence \cite{7}:
\begin{equation}
0 \to Ext({H}^{n+1}(X_{\lambda});G) \to H_n (X_{\lambda};G) \to Hom({H}^n(X_{\lambda});G) \to 0,
\end{equation}
which induces the long exact sequence 

 $0 \to \varprojlim Ext({H}^{n+1}(X_{\lambda});G) \to \varprojlim H_n (X_{\lambda};G) \to \varprojlim Hom({H}^n(X_{\lambda});G) \to$
 
 $ {\varprojlim}^1 Ext({H}^{n+1}(X_{\lambda});G) \to {\varprojlim}^1 H_n (X_{\lambda};G) \to {\varprojlim}^1 Hom({H}^n(X_{\lambda});G) \to$ 
 \begin{equation}
{\varprojlim}^2 Ext({H}^{n+1}(X_{\lambda});G) \to {\varprojlim}^2 H_n (X_{\lambda};G) \to {\varprojlim}^2 Hom({H}^n(X_{\lambda});G) \to  \dots.
 \end{equation}
Note that for each $\lambda \in \Lambda$ the cohomology group $H^n(X_{\lambda};G)$ is finitely generated \cite{7} and so by Corollary 1.5 in \cite{4} we have:
\begin{equation}
{\varprojlim}^r Ext({H}^{n}(X_{\lambda});G)=0, ~~r \geq 1. 
\end{equation}
Therefore, by (2.15) and (2.16) we obtain the exact sequence:
\begin{equation} 
0 \to \varprojlim Ext({H}^{n+1}(X_{\lambda});G) \to \varprojlim H_n (X_{\lambda};G) \to \varprojlim Hom({H}^n(X_{\lambda});G) \to 0.
\end{equation}
Naturally, there exists the commutative diagram:
\begin{equation}
\begin{matrix}
   0 \to Ext(\check{H}^{n+1}(X);G) ~~\to ~~\bar{H}_n (X_;G)~~ \to ~~ Hom(\check{H}^n(X);G) ~~\to ~~0  \\
   \downarrow \psi ~~~~~~~~~~~~~~~~~~~~~~~~~\downarrow \varphi  ~~~~~~~~~~~~~~~~~~~~\downarrow \chi  \\
   0 \to \varprojlim Ext({H}^{n+1}(X_{\lambda});G) \to \varprojlim H_n (X_{\lambda};G) \to \varprojlim Hom({H}^n(X_{\lambda});G) \to 0  \\
\end{matrix}.
\end{equation}
On the other hand, $ \chi : Hom(\check{H}^n(X);G)=Hom( \varinjlim H^n(X_\lambda);G) \to \varprojlim Hom({H}^n(X_{\lambda});G)$ is an isomorphism and so
\begin{equation}
Ker \psi \approx Ker \varphi,    ~~~ Coker \psi \approx Coker \varphi.
\end{equation}
Therefore, we have the following commutative diagram of the exact sequences:
\begin{equation}
\begin{matrix}
  0 \to  Ker \psi \to Ext(\check{H}^{n+1}(X);G) ~~\to ~~\varprojlim Ext({H}^{n+1}(X_\lambda);G)~~ \to ~~ Coker \psi ~~\to ~~0   \\
   \downarrow  \approx ~~~~~~~~~~~~~\downarrow   ~~~~~~~~~~~~~~~~~~~~~~~~~~~~~~\downarrow  ~~~~~~~~~~~~~~~~~~~~~~~~~~~\downarrow \approx ~~~~~  \\
0 \to Ker \varphi~~~~~ \to \bar{H}_n(X;G)~~~~~~~~ ~~\to ~~\varprojlim H_n (X_\lambda;G)~~ ~~~~~~\to ~~~~~ Coker \varphi ~~\to ~~0
   
     \\
\end{matrix}.
\end{equation}
As it is known \cite{4} by Preposition 1.2, for each direct system $ {\bf {A}}=\{ A_\alpha, \pi_{\alpha, \alpha'}, A \}$ of abalian groups there exists a natural exact sequence:
$$0 \to {\varprojlim}^1 Hom(A_\alpha;G)  \to Ext( \varinjlim A_\alpha;G) ~~\to ~~\varprojlim Ext(A_\alpha;G)~~ \to$$
\begin{equation}
 ~~ \to {\varprojlim}^2 Hom(A_\alpha;G) ~~\to ~~0.
\end{equation}
Therefore, for the direct system $ {{H}^*({\bf X})}=\{ {H}^n(X_\lambda), \pi_{\lambda, \lambda'}, \Lambda \}$ of cohomological groups we have:
$$0 \to {\varprojlim}^1 Hom({H}^{n+1}(X_\lambda);G)  \to Ext( \varinjlim H^n(X_\lambda);G) ~~\to ~~\varprojlim Ext(H^n(X_\lambda);G)~~ \to$$
\begin{equation}
 ~~ {\varprojlim}^2 Hom(H^{n+1}(X_\lambda);G) ~~\to ~~0
\end{equation}
and consequently by (2.20) we obtain the exact sequence:
$$0 \to {\varprojlim}^1 Hom(H^{n+1}(X_\lambda);G)  \to \bar{H}_n(X;G) ~~\to ~~\varprojlim H_n(X_\lambda;G)~~ \to$$
\begin{equation}
 ~~ {\varprojlim}^2 Hom(H^{n+1}(X_\lambda);G) ~~\to ~~0.
\end{equation}
On the other hand for each $ \lambda \in \Lambda $ the cohomology group $H^{n+1}(X_\lambda;G)$ is finitely generated \cite{7} and by Corollary 1.5 (see 2e, 3e) we have:
\begin{equation}
{\varprojlim}^2 Hom(H^{n+1}(X_\lambda);G)=0,
\end{equation}
\begin{equation}
{\varprojlim}^1 Hom(H^{n+1}(X_\lambda);G)={\varprojlim}^1 H_{n+1}(X_\lambda; G).
\end{equation}
Therefore, by (2.23), (2.24) and (2.25) we obtain:
\begin{equation}
0 \to {\varprojlim}^1 H_{n+1}(X_\lambda; G)  \to \bar{H}_n(X;G) ~~\to ~~\varprojlim H_n(X_\lambda;G)~~ \to ~~0.
\end{equation}
\end{proof}

\begin{thm} If a homology  theory $\bar{H}_*$  in the Mdzinarishvili sense, defined on the category $Top_C^2$ of compact Hausdorff pairs, is an exact functor of the second argument,  then it is a homology theory in the Inasaridze sense.
\end{thm}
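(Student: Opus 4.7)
The plan is to check that an $\bar{H}_*$ satisfying the hypotheses meets every requirement of the Inasaridze definition. By Theorem 1.7 a nontrivial external extension already satisfies the homotopy, excision, exactness and dimension axioms, and exactness in the second argument is part of the hypothesis. Hence the only point left to verify is continuity for every injective group $G$: for an arbitrary inverse system $\{(Y_\mu,B_\mu)\}$ in $Top_C^2$ with limit $(Y,B)$, one must show $\bar{H}_n(Y,B;G)\cong\varprojlim \bar{H}_n(Y_\mu,B_\mu;G)$.

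My strategy is to first identify $\bar{H}_n(-;G)$ with $Hom(\check{H}^n(-);G)$ on all of $Top_C^2$ whenever $G$ is injective, and then deduce continuity from continuity of $\check{C}$ech cohomology. For the identification, fix $(X,A)\in Top_C^2$ together with a polyhedral expansion $(X,A)=\varprojlim(X_\lambda,A_\lambda)$ and invoke the partial continuity sequence of axiom PC. Since each $H^*(X_\lambda,A_\lambda)$ is finitely generated and $Ext(-;G)=0$ for injective $G$, the classical universal coefficient formula yields $H_n(X_\lambda,A_\lambda;G)\cong Hom(H^n(X_\lambda,A_\lambda);G)$. Applying Proposition 1.2 of \cite{4} to the direct system $\{H^{n+1}(X_\lambda,A_\lambda)\}$ and using $Ext(-;G)=0$ once more forces $\varprojlim{}^1 Hom(H^{n+1}(X_\lambda,A_\lambda);G)=0$, hence also $\varprojlim{}^1 H_{n+1}(X_\lambda,A_\lambda;G)=0$. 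Consequently the $\varprojlim{}^1$ term in PC vanishes, and
\[
\bar{H}_n(X,A;G)\cong \varprojlim Hom(H^n(X_\lambda,A_\lambda);G)\cong Hom\bigl(\varinjlim H^n(X_\lambda,A_\lambda);G\bigr)\cong Hom(\check{H}^n(X,A);G).
\]

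With that identification in hand, continuity is essentially formal. For any inverse system $(Y,B)=\varprojlim(Y_\mu,B_\mu)$ in $Top_C^2$, continuity of $\check{C}$ech cohomology gives $\check{H}^n(Y,B)\cong\varinjlim \check{H}^n(Y_\mu,B_\mu)$, and since $Hom(-;G)$ converts direct limits into inverse limits,
\[
\bar{H}_n(Y,B;G)\cong Hom(\check{H}^n(Y,B);G)\cong \varprojlim Hom(\check{H}^n(Y_\mu,B_\mu);G)\cong \varprojlim \bar{H}_n(Y_\mu,B_\mu;G),
\]
which is the desired continuity for injective coefficients.

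The main obstacle I anticipate is precisely the jump from polyhedral inverse systems (where PC applies directly) to arbitrary inverse systems of compact Hausdorff pairs (which Inasaridze continuity demands); the identification $\bar{H}_n(-;G)\cong Hom(\check{H}^n(-);G)$ is what bridges this gap. The delicate checks are therefore (i) naturality of that identification in $(X,A)$, needed so that the last chain of isomorphisms is an isomorphism of inverse systems rather than just of individual groups, and (ii) the fact that Proposition 1.2 of \cite{4} applies verbatim in the relative setting. Both should be straightforward given the machinery already developed in the proofs of Theorems 2.1 and 2.2.
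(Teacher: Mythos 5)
Your proposal is correct, and its core computation coincides with the paper's: both use the partial continuity sequence $0 \to \varprojlim^1 H_{n+1}(X_\lambda;G) \to \bar{H}_n(X;G) \to \varprojlim H_n(X_\lambda;G) \to 0$ and reduce everything to showing $\varprojlim^1 H_{n+1}(X_\lambda;G)=0$ for injective $G$, which both arguments obtain from Proposition 1.2 of \cite{4} via the vanishing of $Ext(\varinjlim H^{n+1}(X_\lambda);G)$. Two differences are worth noting. First, you pass from $\varprojlim^1 Hom(H^{n+1}(X_\lambda);G)=0$ to $\varprojlim^1 H_{n+1}(X_\lambda;G)=0$ simply because $Ext(H^{n+2}(X_\lambda);G)=0$ for injective $G$ makes the universal coefficient map an isomorphism of inverse systems; the paper instead runs the six-term $\varprojlim^r$ sequence and invokes finite generation of $H^n(X_\lambda)$ plus Corollary 1.5 of \cite{4} to kill $\varprojlim^r Ext$. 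Your route is cleaner and does not need finite generation at this step. Second, and more substantively, the paper stops at the isomorphism $\bar{H}_n(X;G)\approx\varprojlim H_n(X_\lambda;G)$ for a \emph{polyhedral} expansion of $X$, whereas you go on to identify $\bar{H}_n(-;G)$ with $Hom(\check{H}^n(-);G)$ and then deduce continuity for an \emph{arbitrary} inverse system of compact Hausdorff pairs from the continuity of \v{C}ech cohomology. Since Inasaridze's continuity axiom is stated for inverse systems of compacta, not just of polyhedra, your extra step actually closes a gap that the paper's proof leaves implicit; the price is the naturality check you correctly flag, which follows from the functoriality built into axioms $2_{NT}$ and $3_{NT}$ and the naturality of the universal coefficient sequence. (One trivial correction: the Eilenberg--Steenrod axioms for a nontrivial external extension are Theorem 1.6 of the paper, not Theorem 1.7.)
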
 
\begin{proof} We should prove that $\bar{H}_*$ is continuous for each injective group $G$. For each compact Hausdorff space $X \in Top_C^2$ consider an inverse system $ {\bf {X}} = \{ X_{\lambda}, p_{\lambda, \lambda'}, \Lambda \}$ of compact polyhedra such that 
$X={\varprojlim}{\bf {X}}$. By the condition of the theorem, for each abelian group $G$ we have the following short exact sequence:
\begin{equation}
0 \to {\varprojlim}^1 H_{n+1}(X_\lambda; G)  \to \bar{H}_n(X;G) ~~\to ~~\varprojlim H_n(X_\lambda;G)~~ \to ~~0.
\end{equation}
Therefore, we should show that for each  injective group $G$ the first derivative is trivial:
\begin{equation}
{\varprojlim}^1 H_{n+1}(X_\lambda; G)=0.
\end{equation}
Indeed, for each compact polyhedra $X_\lambda$ we have the sequnce \cite{7}:
\begin{equation}
0 \to Ext(H^{n+1}(X_{\lambda});G) \to H_n (X_{\lambda};G) \to Hom(H^n(X_{\lambda});G) \to 0,
\end{equation}
which induces the long exact sequence:

 $0 \to \varprojlim Ext(H^{n+1}(X_{\lambda});G) \to \varprojlim H_n (X_{\lambda};G) \to \varprojlim Hom(H^n(X_{\lambda});G) \to$
 
 $ {\varprojlim}^1 Ext(H^{n+1}(X_{\lambda});G) \to {\varprojlim}^1 H_n (X_{\lambda};G) \to {\varprojlim}^1 Hom(H^n(X_{\lambda});G) \to$ 
 \begin{equation}
{\varprojlim}^2 Ext(H^{n+1}(X_{\lambda});G) \to {\varprojlim}^2 H_n (X_{\lambda};G) \to {\varprojlim}^2 Hom(H^n(X_{\lambda});G) \to  \dots.
 \end{equation}
 As it is known \cite{7}, for each $\lambda \in \Lambda$ the cohomology group $H^n(X_{\lambda};G)$ is finitely generated and by Corollary 1.5 in \cite{4}  we have:
\begin{equation}
{\varprojlim}^r Ext(H^{n+1}(X_{\lambda});G)=0, ~~~r \geq 1.
\end{equation}
Therefore, by (2.30) and (2.31) we obtain the isomorphism:
\begin{equation}
 {\varprojlim}^1 H_n (X_{\lambda};G) \approx {\varprojlim}^1 Hom(H^n(X_{\lambda});G).
 \end{equation}
On the other hand, for each direct system $ {H^*({\bf X})}=\{ H^n(X_\lambda), \pi_{\lambda, \lambda'}, \Lambda \}$ of abelian groups we have:
$$0 \to {\varprojlim}^1 Hom(H^{n+1}(X_\lambda);G)  \to Ext( \varinjlim H^n(X_\lambda);G) ~~\to ~~\varprojlim Ext(H^n(X_\lambda);G)~~ \to$$
\begin{equation}
 ~~ {\varprojlim}^2 Hom(H^{n+1}(X_\lambda);G) ~~\to ~~0.
\end{equation}
By (2.33) for each injective abelian group $G$ we obtain:
\begin{equation}
 ~~ {\varprojlim}^1 Hom(H^{n+1}(X_\lambda);G)=0.
\end{equation}
Therefore, by (2.32) and (2.33) we obtain (2.28). Therefore, for each injective group $G$ we have
\begin{equation}
\bar{H}_n(X;G) \approx \varprojlim H_n(X_\lambda);G)
\end{equation} 

\end{proof}

\subsection*{Acknowledgements}
The authors were supported by grant FR/233/5-103/14 from Shota Rustaveli National Science Foundation (SRNSF).


\begin{thebibliography}{HD}




\normalsize
\baselineskip=17pt


\bibitem[B1]{1} Berikashvili, N. A. Steenrod-Sitnikov homology theory on the category of compact spaces. (Russian) Doklady Akad. Nauk SSSR 22, No 2 (1980), 544-547.

\bibitem[B2]{2} Berikashvili, N. A. Axiomatics of the Steenrod-Sitnikov homology theory on the category of compact Hausdorff spaces. (Russian) Topology (Moscow, 1979). Trudy Mat. Inst. Steklov. 154 (1983), 24--37

\bibitem[E]{11} David A. Edwards and Harold M. Hastings. $\check{C}$ech Theory: Its past, present, and future. Rocky Mountain J. Math. Volume 10, Number 3 (1980), 429-468.


\bibitem[ES]{3} Eilenberg, Samuel; Steenrod, Norman. Foundations of algebraic topology. Princeton University Press, Princeton, New Jersey, 1952


\bibitem[HM]{4} Huber, Martin; Meier, Willi. Cohomology theories and infinite $CW$-complexes. Comment. Math. Helv. 53 (1978), no. 2, 239--257.

\bibitem[I]{5} Inassaridze, Hvedri. On the Steenrod homology theory of compact spaces. Michigan Math. J. 38 (1991), no. 3, 323--338

\bibitem[IM]{6} Inasaridze, Kh. N.; Mdzinarishvili, L. D. On the connection between continuity and exactness in homology theory. (Russian) Soobshch. Akad. Nauk Gruzin. SSR 99 (1980), no. 2, 317--320. 

\bibitem[Mar]{7}Marde$\check s$i$\acute{c}$, Sibe. Strong shape and homology. Springer Monographs in Mathematics. Springer-Verlag, Berlin, 2000.

\bibitem[M]{8}Mdzinarishvili, L. D. On homology extensions. Glas. Mat. Ser. III 21(41) (1986), no. 2, 455--482

\bibitem[Mil]{9} Milnor, John. On the Steenrod homology theory. Mimeographed Note, Princeton, 1960

 \bibitem[S]{10}Steenrod, N. E. Regular cycles of compact metric spaces. Ann. of Math. (2) 41, (1940). 833--851





\end{thebibliography}
\end{document}